\documentclass{amsart}
\usepackage[x11names]{xcolor}
\usepackage{graphicx}
\usepackage{amssymb}
\usepackage{amsmath}
\usepackage{amsthm}
\usepackage{mathtools}
\usepackage{hyperref}

\theoremstyle{plain}
\newtheorem{theorem}{Theorem}[section]
\newtheorem{lemma}[theorem]{Lemma}
\newtheorem{proposition}[theorem]{Proposition}
\newtheorem{corollary}[theorem]{Corollary}
\newtheorem{conjecture}[theorem]{Conjecture}
\theoremstyle{definition}

\newtheorem{note}[theorem]{Note}

\makeatletter
\@namedef{subjclassname@2010}{%
  \textup{2010} Mathematics Subject Classification}
  \makeatother

\newcommand{\germ}{\operatorname{\ch@irxoperatorfont{germ}}}

\newcommand{\zetalpha}{\zeta^{(\alpha)}}
\newcommand{\zetak}{\zeta^{(k)}}
\newcommand{\etak}{\eta^{(k)}}

\newcommand{\etalpha}{\eta^{(\alpha)}}

\newcommand{\mc}{u}
\newcommand{\gameta}[1]{\widetilde\gamma_{#1}}

\newcommand{\diffint}[5]{{}^{#2}_{#5}\!#1^{(#4)}_{#3}}
\newcommand{\GL}[2]{\diffint{D}{GL}{#1}{#2}{}}    
    
\newcommand{\R}{\mathbb{R}} \newcommand{\Z}{\mathbb{Z}}
\newcommand{\C}{\mathbb{C}} \newcommand{\N}{\mathbb{N}}

\newcommand{\ourref}[2]{#1~\ref{#2}}

\title{Contributions to the theory of the Euler eta function}

\author[T. Caparatta]{Torre Caparatta}\email{tecapara@uncg.edu}
\author[S. Pauli]{Sebastian Pauli}\email{s\_pauli@uncg.edu}
\author[F. Saidak]{Filip Saidak}\email{f\_saidak@uncg.edu}
\address{Department of Mathematics and Statistics, University of North Carolina Greensboro, USA}
\date{\today}

\begin{document}

\maketitle

\begin{abstract}
We investigate the distribution of the zeros of the Euler's function $\eta(s)$ and its integral and fractional derivatives.
\end{abstract}

\section{Introduction}\label{sec eta intro}

We investigate the distribution of the zeros of the Euler eta function $\eta(s)$ (also called the alternating zeta function or the Dirichlet eta function) and its derivatives -- both integral and fractional -- in the complex plane. Our main goal will be to establish the following: 

\begin{itemize}
    \item zero-free regions for $\eta(s) - c$, where $c$ is a constant (in Section \ref{sec eta skorokhodov});
    \item zero-free regions for the derivatives $\eta^{(\alpha)}(s)$, where $\alpha > 0$ is real (in Section \ref{sec eta right});
    \item the non-vanishing of $\eta^{\prime}(s)$ in the left half of the critical strip, assuming the Riemann Hypothesis (in Section \ref{sec eta critical});
    \item intervals in the negative real numbers where $\eta'(s)$ has zeros (in Section \ref{sec eta left})
    \item estimates for the zero-counting functions $N_{\eta}^k(T)$, with $k \geq 1$, and in particular prove that $N_{\eta}^k(T) = N_{\zeta}(T) + O(\log T)$ (in Section \ref{sec eta number}).
\end{itemize}

Most of these results complement those found in the theory of the Riemann zeta function $\zeta(s)$ that are now considered classical. Finally, in Section \ref{sec eta conjecture} we discuss further observations
about the zeros of the derivatives of \(\eta(s)\), that lead to open problems and conjectures


For convenience, throughout the text we try to point out numerous parallels between fundamental properties of the zeros of $\zeta(s)$ and those of $\eta(s)$, but we also note a number of surprising differences, none stranger than the existence of a double zero (see Figure \ref{fig eta double zero}).

\subsection*{The Functions $\zeta(s)$ and $\eta(s)$}
Let us recall a few well-known historical facts. For $s=\sigma + it$,  the Riemann $\zeta$-function and the Euler $\eta$-function (its alternating version) are defined as: 
\begin{align}\label{eq eta}
  \zeta(s) := \sum_{n=1}^{\infty}\frac{1}{n^s} \quad \mathrm{and} \quad  \eta(s) := \sum_{n=1}^{\infty}\frac{(-1)^{n+1}}{n^s},
\end{align}
    where the expressions converge for all $s$ with $\sigma >1$ and $\sigma >0$, respectively. Therefore, unlike the $\zeta$-function, the Euler $\eta$-function does not have a pole at $s=1$. In fact, integrating the geometric series $1 - x + x^2 - x^3 +  \cdots = (1+x)^{-1}$ (with $|x| < 1$), one finds (as already discovered by Mengoli in 1650 \cite{mengoli}):  
\[
\eta(1) = \lim_{x \to 1^-} \left[ x - \frac{x^2}{2} + \frac{x^3}{3} - \frac{x^4}{4} + \cdots \right]  =  \int_0^1 \frac{1}{1+x}  \; dx = \log 2.
\]
Also, the lack of a pole of \(\eta(s)\) makes it an entire function that can be expressed in the following infinite series form:
\begin{equation}\label{eq eta laurent}
\eta(s)=\sum_{j=0}^\infty (-1)^j\frac{\gameta{j}}{j!}(s-1)^j,
\end{equation}
where $\gameta{j}$ are constants. Moreover, if we let $\Gamma(s) := \int_0^{\infty} t^{s-1} e^{-t} \; dt$ be the Legendre \cite{le-1809} integral for the $\Gamma$-function (first defined by Euler \cite{euler1730} in a 1730 letter to Goldbach), then the Euler $\eta$-function can be represented in the form (analogous to Abel's formula \cite{abel1823} involving $\zeta(s)$ and $\Gamma(s)$):
\begin{align*} 
    \eta(s)\Gamma(s) = \int_0^\infty\frac{t^{s-1}}{e^t+1}dt.
\end{align*}
Furthermore, one can easily observe that
\begin{align}\label{altern zeta}
    \eta(s) = \zeta(s) - 2 \frac{\zeta(s)}{2^s} = \left(1-2^{1-s}\right)\zeta(s),
\end{align}
and this implies that all zeros of $\zeta(s)$ are also zeros of $\eta(s)$, while additional zeros come from the solutions of the equation $1 = 2^{1-s}$, namely: $s_n=1+\frac{2n\pi i}{\log 2}$, where $n$ is a nonzero integer. 
Also, as Euler noted in 1749 \cite{Euler1749}: 
$$ 
\frac{\eta(1-s)}{\eta(s)} = - \frac{\Gamma(s)(2^s - 1)}{(2^{s-1} - 1) \pi^s} \cos \left( \frac{\pi s}{2} \right),
$$
which is equivalent to 
\begin{align}\label{eta functional}
    \eta(-s) = 2\pi^{-s-1}s\Gamma(s)\sin\left(\frac{\pi s}{2}\right)\frac{1-2^{-s-1}}{1-2^{-s}}\eta(s+1),
\end{align}
the so-called functional equation of the Euler \(\eta\)-function (of which Hardy  gave a simple proof in 1922 \cite{hardy}).
In more recent times there has been a renewed interest in $\eta(s)$: various new properties were studied by Sondow \cite{Son03}, Milgram
\cite{Mil13}, Alzer and Kwong \cite{AK15}, Boyadzhiev and Frontczak \cite{BF21}, and others.

\subsection*{Derivatives}
We now consider the $k$-th derivatives of $\eta(s)$, starting with the observation that, for $k\in \N$ and $\Re(s)>0$ the definition \eqref{eq eta} readily implies
\begin{align*}
    \eta^{(k)}(s)=(-1)^k\sum_{n=1}^{\infty}\frac{(-1)^{n}\log^k n}{n^s},
\end{align*}
while taking a derivative of both sides of \eqref{altern zeta}, gives 
\begin{equation*}
\eta^{\prime}(s) = 2^{1 -s} \log 2 \zeta(s) + (1 - 2^{1-s}) \zeta^{\prime}(s), 
\end{equation*}
useful, among other things, for computing special values, like $\eta^{\prime}(0) = \frac{1}{2} \log \left(\frac{\pi}{2} \right)$
and $\eta^{\prime}(1) = \gamma \log 2 + \frac{1}{2}(\log 2)^2$.

In this paper we investigate the locations of zeros of $\eta(s)$ and its derivatives $\eta^{(\alpha)}(s)$, 
for real  $\alpha> 0$.   When it comes to the fractional derivatives, one has two main options, namely the Gr\"unwald-Letnikov fractional derivative \cite{g:1867,l:1869-1,l:1869-2}
and the Riemann-Liouville fractional derivative.  Since many results for integral derivatives easily generalize to the (reverse) Gr\"unwald-Letnikov fractional derivative (for the Riemann zeta function case see, for example, \cite{fps} motivated by \cite{kreminski03}) we consider this fractional derivative and formulate our results for both the integral and this fractional derivative when possible.
For all real $\alpha > 0$ and $c\in\C$ we have $\GL{\leftarrow}{\alpha}[c] = 0$ and for $m\ne 0$ we have $\GL{\leftarrow}{\alpha}\left[e^{ms}\right]  = m^\alpha e^{ms}$.  Thus
\begin{align*}
    \etalpha(s) := \GL{\leftarrow}{\alpha}[\eta(s)] = 
    (-1)^\alpha\sum_{n=1}^{\infty}\frac{(-1)^{n}\log^{\alpha} n}{n^s}, 
\end{align*}
where $s\in\C$ with $\Re(s)>0$.  Repeated differentiation of \eqref{eq eta laurent} yields
\begin{equation}\label{eq eta laurent diff}
\eta^{(k)}(s)=(-1)^k\sum_{j=0}^\infty (-1)^{j}\frac{\gameta{k+j}}{j!}(s-1)^j,
\end{equation}
which implies that $\gameta{k}=\eta^{(k)}(1)$.
The most natural way to generalize \eqref{eq eta laurent diff} to the $\alpha$-th fractional derivatives is by defining \(\widetilde\gamma_\alpha\) for \(\alpha\ge 0\) by:  
\begin{equation}\label{eta power}
\eta^{(\alpha)}(s)=(-1)^{\alpha}\sum_{j=0}^\infty (-1)^{j}\frac{\gameta{\alpha+j}}{j!}(s-1)^j,
\end{equation}
such that \(\gameta{\alpha}=\eta^{(\alpha)}(1)\) for $\alpha\in[0,\infty)$. 
This immediately yields 
\begin{equation}\label{eq gameta}
\gameta{\alpha}=\sum_{n=1}^{\infty}(-1)^{n}\frac{\log^\alpha(n)}{n}.
\end{equation}
\begin{proposition}
For all non-negative integers \(k\),
\(b\in\Z\), and \(\sigma\in\R\) we have \(\Re\left(\eta^{(k+{\frac{1}{2}})}(\sigma)\right) =  0\).
\begin{proof}
    Since $\gameta{\frac{1}{2}+k} \in \R$ by (\ref{eq gameta}), we deduce from  (\ref{eta power}) that
    \[
        \Re\bigl(\eta^{(k+\frac{1}{2})}(\sigma)\bigr)=\Re\Bigl((-1)^{\frac{1}{2}+k}\sum_{j=1}^{\infty}(-1)^j\frac{\gameta{j+k+\frac{1}{2}}}{j!}(\sigma-1)^j\Bigr)=0. \qedhere
    \]
\end{proof}
\end{proposition}

\section{Non-Vanishing of $\eta(s) - c$}\label{sec eta skorokhodov}

We start by looking at the properties of the functions $\eta(s) - c$, where $c \in \mathbb{R}$ is a fixed constant. In particular, we study the non-vanishing of $\eta(s) - c$, which 
(while interesting in its own right) helps describe the paths \(s:[0,1]\to\C\) given by \(\eta(s(c))-c=0\) from a zero \(s(0)\) of \(\eta\) to a zero \(s(1)\) of \(\eta-1\).  
These paths link the zeros of \(\eta\) with the zeros of its fractional derivatives see Figures \ref{eta plot} and \ref{eta alpha}.

\begin{figure}[ht]
    \centering
    \includegraphics[width = \textwidth]{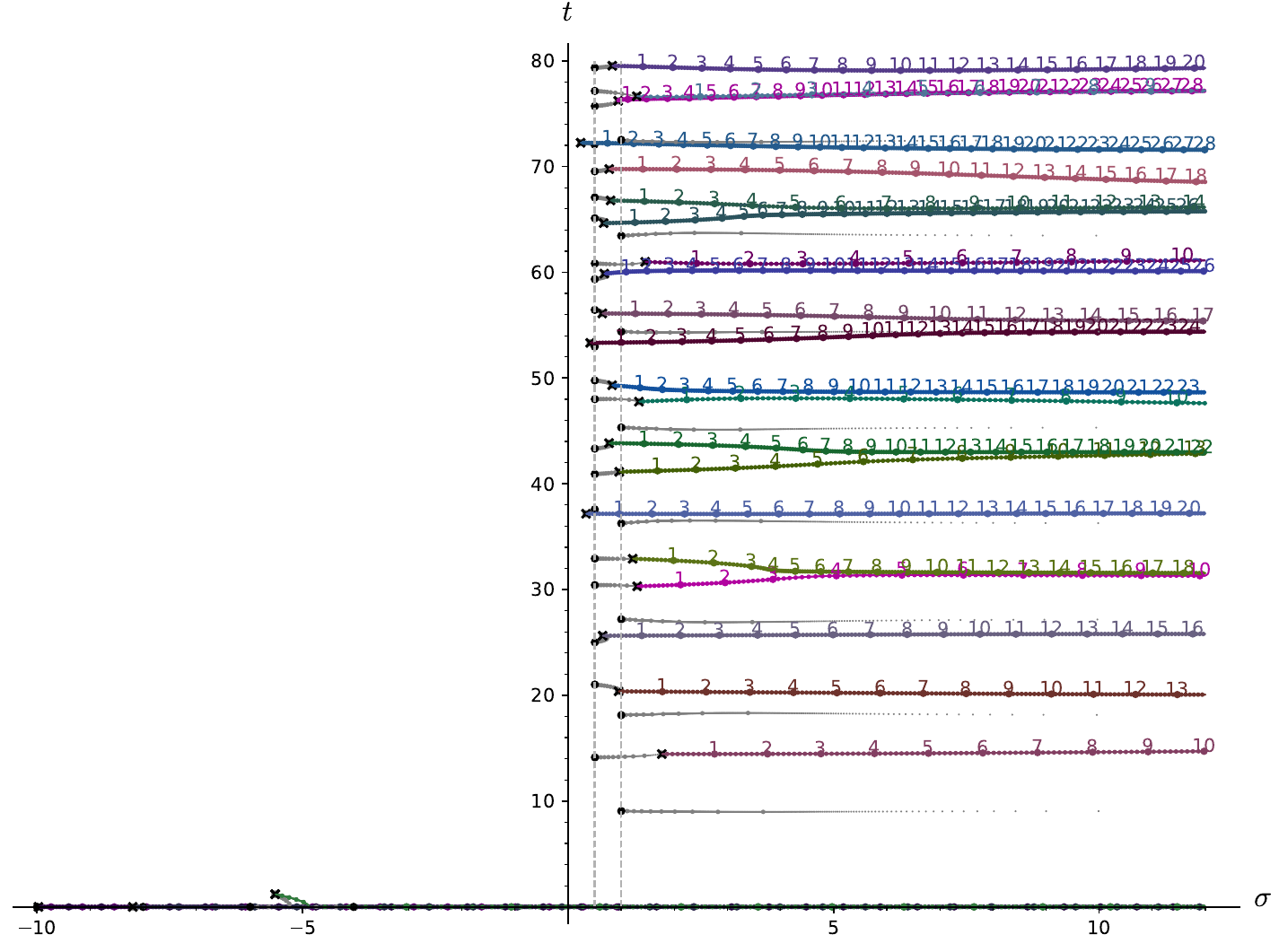}
    \caption{Zeros of the fractional derivatives of the Euler \(\eta\) function with zeros of $\eta(s)$ represented by $\color{black}\bullet$,  zero of $\eta(s)-1$ represented by \textsf{x}, and zeros of $\eta^{(k)}(s)$r repersented by $\color{blue}\bullet^{k}$. Grey lines are zeros of \(\eta(s)-c\) for \(0\le c\le 1\).   For the non-real zeros on the left half plane also see Figure \ref{fig eta double zero}.}
    \label{eta plot}
\end{figure} 

First, we prove the following special case: 

\begin{proposition}\label{sch eta}
The function $\eta(s)$ is distinct from unity at $\sigma\in(\sigma_0,\infty)$, where
\[\sigma_0=1.940101683745\dots\]
is the zero of the function \(f(\sigma) = 1+2^{-\sigma}-(1-2^{-\sigma})\zeta(\sigma)\) with \(\sigma>1\).
\end{proposition}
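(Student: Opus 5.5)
The plan is to read the statement as the paper words it: $\sigma$ is a real number in $(\sigma_0,\infty)$, and we must show $\eta(\sigma)\neq 1$. The key idea is to express $1-\eta(\sigma)$ through the odd-indexed part of $\zeta$, and to observe that the hypothesis $f(\sigma)>0$ is exactly the statement that this odd part is smaller than the term $2^{-\sigma}$.

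Set $O(\sigma):=\sum_{n\ge 3,\ n \text{ odd}} n^{-\sigma}$. Separating odd and even indices in $\zeta(\sigma)$ gives
\[
(1-2^{-\sigma})\zeta(\sigma)=\sum_{n \text{ odd}} n^{-\sigma}=1+O(\sigma),
\]
so that
\[
f(\sigma)=1+2^{-\sigma}-(1-2^{-\sigma})\zeta(\sigma)=2^{-\sigma}-O(\sigma).
\]
Hence $f(\sigma)>0$ is equivalent to $O(\sigma)<2^{-\sigma}$. Next, the even indices $n\ge 4$ contribute $\sum_{n\ge 4,\ n\text{ even}} n^{-\sigma}=2^{-\sigma}(\zeta(\sigma)-1)$. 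Splitting the series \eqref{eq eta} in the same way then yields the exact identity
\[
1-\eta(\sigma)=2^{-\sigma}+2^{-\sigma}\bigl(\zeta(\sigma)-1\bigr)-O(\sigma).
\]
Since $\zeta(\sigma)>1$ for real $\sigma>1$, this gives the lower bound
\[
1-\eta(\sigma)> 2^{-\sigma}-O(\sigma)=f(\sigma).
\]
Therefore $\eta(\sigma)\neq1$ wherever $f(\sigma)\ge 0$.

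It remains to show that $f>0$ on $(\sigma_0,\infty)$. Write
\[
f(\sigma)=2^{-\sigma}\Bigl(1-\sum_{m\ge1}\bigl(\tfrac{2}{2m+1}\bigr)^{\sigma}\Bigr).
\]
Each summand $(2/(2m+1))^{\sigma}$ is strictly decreasing in $\sigma$, because $2/(2m+1)<1$. So the bracket is strictly increasing in $\sigma$ for $\sigma>1$, and it tends to $1$ as $\sigma\to\infty$. As $\sigma\to1^+$ the bracket tends to $-\infty$, since $O(\sigma)\to\infty$ there. Consequently the bracket, and with it $f$, has a unique zero $\sigma_0$ in $(1,\infty)$, and $f(\sigma)>0$ for every $\sigma>\sigma_0$. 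The numerical value $\sigma_0=1.9401\dots$ is then a root-finding computation, which I would only cite.

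The step I expect to be the main obstacle is a complex version of the statement, that is, $\eta(s)\ne1$ for all $s$ with $\Re s>\sigma_0$. The argument above does not extend there. The term $2^{-s}(\zeta(s)-1)$ can no longer be discarded by a sign argument, and the lower bound $|\zeta(s)|\ge \zeta(2\sigma)/\zeta(\sigma)$ is too weak to take its place. For the real statement, however, which is what the proposition asserts, the sign argument and the monotonicity of $f$ suffice.

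Incidentally, for real $\sigma>0$ one already has $\eta(\sigma)<1$ directly from the alternating grouping $1-\eta(\sigma)=\sum_{m\ge1}\bigl((2m)^{-\sigma}-(2m+1)^{-\sigma}\bigr)>0$. The route through $f$ is nonetheless the one that explains the constant $\sigma_0$ in the statement.
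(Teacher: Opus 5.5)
Your computation for real $\sigma$ is correct, but it proves only a reading of the proposition with no content. As you observe yourself, $1-\eta(\sigma)=\sum_{m\ge1}\bigl((2m)^{-\sigma}-(2m+1)^{-\sigma}\bigr)>0$ for every real $\sigma>0$, so on that reading $\sigma_0$ plays no role. The paper means the half-plane statement: $\eta(s)\neq1$ for all complex $s$ with $\Re s>\sigma_0$. Its proof works with $s=\sigma+it$, multiplies $\eta(s)-1$ by $1+2^{-s}$, and lets the term $2^{-s}$ dominate to get $|1+2^{-s}|\,|\eta(s)-1|\ge f(\sigma)$. The paper later uses the result to place the zeros of $\eta(s)-1$ in $\Re s<1.9402$. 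So the step you set aside as the ``main obstacle'' is the entire content, and your proposal does not prove the proposition in the intended sense.

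Your doubt about the complex case is justified, though, and the gap cannot be closed by following the paper. Its starting identity $(1+2^{-s})\eta(s)=\sum_{n\ge0}(2n+1)^{-s}$ is false: at $s=2$ the two sides are $\tfrac54\cdot\tfrac{\pi^2}{12}$ and $\tfrac{\pi^2}{8}$. The correct odd-part identity is $(1-2^{-s})\zeta(s)=\sum_{n\ge0}(2n+1)^{-s}$, and $f$ and $\sigma_0=1.9401\ldots$ are exactly what the cited argument yields for $\zeta(s)\neq1$, not for $\eta$.

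In fact the half-plane claim for $\eta$ with this $\sigma_0$ fails. Consider the Liouville twist $\eta_\lambda(s)=\sum_{n\ge1}(-1)^{n+1}\lambda(n)n^{-s}=(1+2^{1-s})\zeta(2s)/\zeta(s)$.
\begin{itemize}
\item It is real on $(1,\infty)$, with $\eta_\lambda(2)=\tfrac32\cdot\tfrac{\pi^2}{15}=\tfrac{\pi^2}{10}<1$ and $\eta_\lambda(\sigma)=1+2^{-\sigma}+O(3^{-\sigma})>1$ for large $\sigma$.
\item Hence $\eta_\lambda(\sigma^*)=1$ for some $\sigma^*>2$; numerically $\sigma^*\approx2.07$.
\item By Kronecker's theorem there are $t_n$ with $p^{-it_n}\to-1$ for every prime $p$, so $\eta(s+it_n)\to\eta_\lambda(s)$ locally uniformly in $\Re s>1$.
\item Hurwitz's theorem then gives solutions of $\eta(s)=1$ with real part arbitrarily close to $\sigma^*>\sigma_0$.
\end{itemize}
A correct half-plane version needs a larger abscissa. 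For example, plain dominance of $2^{-s}$ gives $|\eta(s)-1|\ge 1+2^{1-\sigma}-\zeta(\sigma)>0$ for $\Re s>2.43$.
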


\begin{proof}
    Using a similar relation as in \cite[p.1293]{s:3} for \(\sigma>1\) we have
    \begin{align}\label{eta odd1}
        (1+2^{-s})\eta(s) = \sum_{n=0}^{\infty}\frac{1}{(2n+1)^s}.
    \end{align}
    One obtains 
    \begin{align*}
        (1+2^{-s})(\eta(s)-1) = -2^{-s}+\sum_{n=1}^{\infty}\frac{1}{(2n+1)^s}.
    \end{align*}
    With \(|a|+|b|\geq |a-b|\geq |a|-|b|\) we get 
    \begin{align}\label{eta odd3}
    |1 + 2^{-s}||\eta(s)-1|\geq | -2^{-s}|-\sum_{n=1}^{\infty}\frac{1}{|(2n+1)^s|}=1+2^{-\sigma} -\sum_{n=0}^{\infty}\frac{1}{(2n+1)^{\sigma}}.
    \end{align}
    Therefore, using (\ref{eta odd1}), we can rewrite (\ref{eta odd3}) as
    \[
        |1+2^{-s}||\eta(s)-1|\geq 1+2^{-\sigma}-(1-2^{-\sigma})|\eta(s)|
    \geq1+2^{-\sigma}-(1-2^{-\sigma})\zeta(\sigma)=f(\sigma)\qedhere
    \]
\end{proof}

The simplest forms of the zero-free regions for $\eta(s)-c$ can be obtained with little more than basic information concerning the modulus of the functions involved:

\begin{lemma}
    If $c\in[0,1)$ and $|\sin(t\log 2)|\geq 2^\sigma \zeta(\sigma)-2^\sigma - 1$, then 
    $$\eta(\sigma +it)-c \ne 0. $$
\end{lemma}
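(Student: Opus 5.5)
The plan is to show that, under the stated hypothesis, $\Im\,\eta(\sigma+it)\neq 0$. Since $c\in[0,1)$ is real, this immediately gives $\eta(\sigma+it)-c\neq 0$. The only property of $c$ I will use is that it is real. I work with $\sigma>1$, where the hypothesis is meaningful because $\zeta(\sigma)$ is finite, and where the Dirichlet series for $\eta$ converges absolutely.

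First I isolate the $n=2$ term, which carries the oscillating factor $2^{-it}$:
\[
\eta(s)=1-2^{-s}+R(s),\qquad R(s):=\sum_{n=3}^{\infty}\frac{(-1)^{n+1}}{n^{s}}.
\]
Since $\Im(1)=0$ and $\Im(-2^{-s})=2^{-\sigma}\sin(t\log 2)$, this gives
\[
\Im\,\eta(s)=2^{-\sigma}\sin(t\log 2)+\Im R(s).
\]
By the triangle inequality, $|\Im R(s)|\le |R(s)|\le \sum_{n\ge 3}n^{-\sigma}=\zeta(\sigma)-1-2^{-\sigma}$. Hence
\[
|\Im\,\eta(s)|\ \ge\ 2^{-\sigma}|\sin(t\log 2)|-\bigl(\zeta(\sigma)-1-2^{-\sigma}\bigr).
\]
Multiplying the hypothesis $|\sin(t\log 2)|\ge 2^\sigma\zeta(\sigma)-2^\sigma-1$ by $2^{-\sigma}$ turns it into exactly $2^{-\sigma}|\sin(t\log 2)|\ge \zeta(\sigma)-1-2^{-\sigma}$. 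So the right-hand side of the last display is $\ge 0$.

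The main obstacle is upgrading $\ge 0$ to $>0$, that is, showing the bound $|\Im R(s)|\le\zeta(\sigma)-1-2^{-\sigma}$ is strict. There are two ways to do this.

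The first is a case split on $\sin(t\log 2)$.
\begin{itemize}
\item If $\sin(t\log 2)=0$, the hypothesis would force $\zeta(\sigma)\le 1+2^{-\sigma}$. This is impossible because $\zeta(\sigma)>1+2^{-\sigma}$ for every $\sigma>1$. So this case never occurs.
\item If $\sin(t\log 2)\neq 0$, I argue that equality in $|R(s)|\le\sum_{n\ge3}n^{-\sigma}$ cannot hold. Equality would require all the unit vectors $(-1)^{n+1}n^{-it}$, $n\ge 3$, to point in one common direction $e^{i\theta}$. Taking $n=3$ and $n=9$ gives $9^{-it}=(3^{-it})^2=e^{2i\theta}$, so $e^{i\theta}=1$. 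Then $n=4$ gives $4^{-it}=-1$, i.e. $(2^{-it})^2=-1$, so $2^{-it}=\pm i$. But then $8^{-it}=(2^{-it})^3=\mp i$, which contradicts $8^{-it}=-1$, the value forced by $n=8$.
\end{itemize}
So the inequality is strict, and $|\Im\,\eta(s)|>0$.

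Alternatively, one can avoid the equality analysis by noting that $\Im R$ attaining $\pm|R|$ with $|R|$ maximal would make $R$ purely imaginary of maximal modulus, which the same multiplicativity argument rules out.

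In either case $\Im\,\eta(\sigma+it)\neq 0$, and since $c$ is real, $\eta(\sigma+it)\neq c$. This proves the lemma.
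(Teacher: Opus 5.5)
Your proof is correct and follows the paper's argument: isolate the $n=2$ term of $\Im\,\eta(s)$, bound the tail by $\zeta(\sigma)-1-2^{-\sigma}$, and use that $c$ is real. Your multiplicativity argument ($n=3,9$ force $e^{i\theta}=1$, then $n=4,8$ clash) supplies the strict inequality that the paper only asserts when it writes ``$>0$'' under a non-strict hypothesis; since that argument works for every $t$, the case split on $\sin(t\log 2)$ is unnecessary, and the restriction $\sigma>1$ is genuinely needed rather than merely convenient (for $0<\sigma<1$ one has $\zeta(\sigma)<0$, so the hypothesis holds for all $t$, yet $\eta$ vanishes at the nontrivial zeros of $\zeta$).
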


\begin{proof}
    We consider the imaginary part of $\eta(s)-c$ and obtain
    \begin{align*}
        |\Im(\eta(s)-c)|&\geq \left|\frac{\sin(t\log 2)}{2^\sigma} \right|-
        \left|\sum_{n=3}^\infty \frac{(-1)^{n+1}}{n^s}\right|\\
        &\geq \left|\frac{\sin(t\log 2)}{2^\sigma} \right|-\zeta(\sigma)+1+\frac{1}{2^\sigma} 
             > 0
    \end{align*}
   whenever $|\sin(t\log 2)|\geq 2^\sigma \zeta(\sigma) -2^\sigma-1$, as wanted.
\end{proof}
Similarly, we have:
\begin{lemma}
    If $c\in[0,1)$ and $-\cos(t \log 2 ) \geq 2^\sigma\zeta(s) -2^\sigma-1$, then 
    $$\eta(\sigma +it)-c \ne 0. $$
\end{lemma}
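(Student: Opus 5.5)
We mirror the proof of the previous lemma, but work with the real part of $\eta(s)-c$ instead of the imaginary part. (We read the $\zeta(s)$ in the hypothesis as $\zeta(\sigma)$, as in the preceding lemma, and assume $\sigma>1$ so that the series converges absolutely.)

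\emph{Isolating the first two terms.} Write $s=\sigma+it$ with $\sigma>1$. Split off the first two terms of the series:
\[
\eta(s)-c = (1-c) - 2^{-s} + \sum_{n=3}^\infty \frac{(-1)^{n+1}}{n^s}.
\]
Since $2^{-s}=2^{-\sigma}\bigl(\cos(t\log 2)-i\sin(t\log 2)\bigr)$, the real part of the first two terms is
\[
(1-c) - \frac{\cos(t\log 2)}{2^\sigma}.
\]
Because $c\in[0,1)$, we have $1-c>0$. Under the hypothesis $-\cos(t\log 2)\ge 2^\sigma\zeta(\sigma)-2^\sigma-1$, the quantity $-\cos(t\log 2)$ is at least $2^\sigma(\zeta(\sigma)-1)-1>-1$; in fact it only matters that $-\cos(t\log 2)\ge 2^\sigma\zeta(\sigma)-2^\sigma-1$. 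Hence
\[
(1-c) - \frac{\cos(t\log 2)}{2^\sigma} > -\frac{\cos(t\log 2)}{2^\sigma} \ge 0.
\]

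\emph{Bounding the tail.} The absolute value of the tail is at most
\[
\sum_{n\ge3} n^{-\sigma} = \zeta(\sigma)-1-2^{-\sigma}.
\]

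\emph{Combining.}
\[
\Re(\eta(s)-c) \ge (1-c) + \frac{-\cos(t\log2)}{2^\sigma} - \zeta(\sigma) + 1 + 2^{-\sigma} > \frac{-\cos(t\log 2)}{2^\sigma} - \zeta(\sigma) + 1 + 2^{-\sigma},
\]
where the strict inequality uses $1-c>0$. The right-hand side is $\ge 0$ exactly when $-\cos(t\log2)\ge 2^\sigma\zeta(\sigma)-2^\sigma-1$, which is the hypothesis. Thus $\Re(\eta(s)-c)>0$, and so $\eta(s)\ne c$.

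\emph{Main subtlety.} The only delicate point is strictness. In the previous lemma strictness came implicitly, whereas here the positive term $1-c$ supplies it cleanly. Its sign is exactly why the statement requires $c<1$.
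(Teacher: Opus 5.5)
Your proof is correct and is essentially the paper's argument: you isolate $1-c-2^{-\sigma}\cos(t\log 2)$ in $\Re(\eta(s)-c)$, bound the tail by $\zeta(\sigma)-1-2^{-\sigma}$, and conclude $\Re(\eta(s)-c)>0$. Your restriction to $\sigma>1$ and your reading of $\zeta(s)$ as $\zeta(\sigma)$ are what the paper's proof tacitly uses, and you correctly take the strictness from $1-c>0$, which the paper leaves implicit; the only blemish is the unused intermediate claim $-\cos(t\log 2)/2^\sigma\ge 0$, which your stated bound ``$>-1$'' does not justify, though it holds because $2^\sigma\zeta(\sigma)-2^\sigma-1=\sum_{n\ge 3}(2/n)^\sigma>0$.
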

\begin{proof}
    For the real part of $\eta(s)-c$ we obtain
    \begin{align*}
        \Re(\eta(s)-c) 
        &= 1-c-\frac{1}{2^\sigma}\cos(t \log 2) +\frac{1}{3^\sigma}\cos(t \log 3)-\cdots+\cdots\\
        &\geq -\frac{1}{2^\sigma}\cos(t\log2) -\Re\left(\eta(s) -1 + \frac{1}{2^s
        }\right) \\
        &\geq -\frac{1}{2^\sigma}\cos(t \log 2) -\left(\zeta(\sigma)-1-\frac{1}{2^\sigma}\right) > 0,
    \end{align*}
 whenever $ -\cos(t \log 2) \geq 2^\sigma\zeta(s) -2^\sigma-1$. 
\end{proof}

\begin{note}\label{note -c}
In Figure \ref{eta plot}  we notice that the real part
of the paths of zeros of \(\eta(s)-c\) with starting point \(s_n=1+\frac{2n\pi i}{\log 2}\) approaches 
infinity as \(c\) approaches 1, while the path that start at a zero of \(\zeta(s)\) lead to a
zero of \(\eta(s)-1\).  

In the case of the Riemann zeta function there  are paths  of zeros of \(\zeta(s)-c\) that start at a zero 
of \(\zeta\) whose real part approaches infinity as \(c\) approaches 1 (compare \cite{s:3} and \cite{bp}).
\end{note}

\begin{figure}[ht]
    \centering
    \includegraphics[width=.8\textwidth]{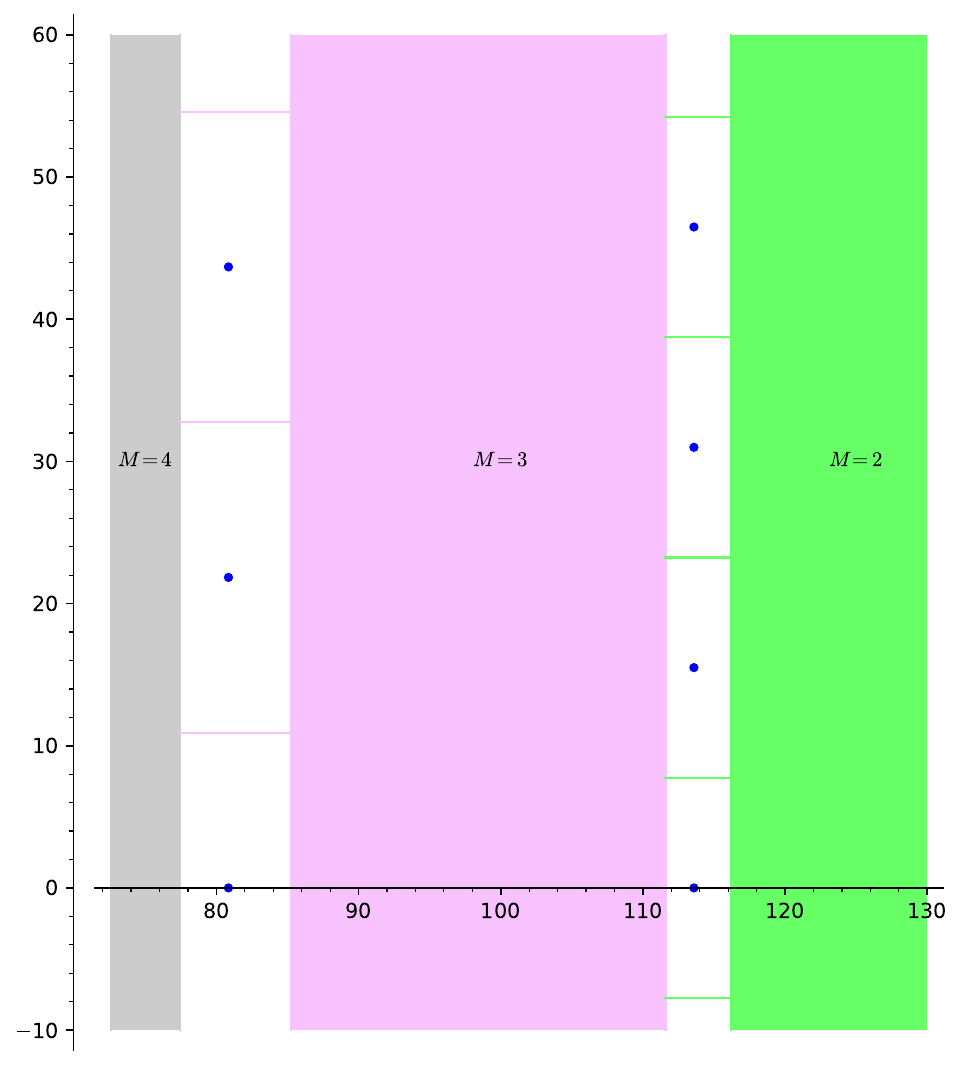}
    \caption{Zero-free regions and lines for $\eta^{(100)}(s)$ with zeros  of $\eta^{(100)}$ represented by \(\color{blue}\bullet\).}
    \label{eta cloud}
\end{figure}

\section{Right Half-Plane}\label{sec eta right}

Now we turn our attention to the derivatives of $\eta(s)$, as defined in Section 1.2. We start by focusing on zero-free regions of $\etalpha(s)$ (where $\alpha > 0 $ is real) in the right half-plane. The situation is similar to the zero-free regions of $\zetalpha(s)$ (which we have studied in \cite{bps} and \cite{fps}), so we'll find it useful to re-introduce  some of the relevant notation. 

\subsection*{Statements of Two Theorems}

Set $Q^{\alpha}_n(s):=(\log n)^\alpha/n^s$ and write the Dirichlet series of $(-1)^{\alpha} \etalpha(s)$ as
\begin{equation}\label{eq Q}
(-1)^{\alpha} \etalpha(s)=(-1)^{\alpha}\sum_{n=2}^\infty \frac{(-1)^{n-1}\log^\alpha n}{n^s}=(-1)^{\alpha}\sum_{n=2}^\infty (-1)^{n-1}Q^\alpha_n(s).
\end{equation}
 In what follows, we prove the existence of zero-free regions where one of the terms of  \eqref{eq Q}, say $Q^{\alpha}_M(\sigma)$, 
dominates the rest of the series, that is, when
\begin{equation*}
Q^\alpha_M(\sigma) > \sum_{n\ne M} Q^\alpha_n(\sigma),
\end{equation*}
and, in a complementary fashion, we look for the zeros of $ \etalpha(s)$ near the regions of the complex plane where 
$Q^{\alpha}_M(s) = Q^{\alpha}_{M+1}(s)$, in other words where no term of the series can attain dominance. This happens 
when the cancellation of those two terms occurs, at
\begin{equation*}
q_M:=\frac{\log \left( \frac{\log M}{\log (M+1)}  \right)}{\log \left( \frac{M}{M+1} \right)}. 
\end{equation*}

Our first main goal will be to prove: 

\begin{theorem}\label{thmone}

Let $\alpha>0$. The following statements are true: 
\begin{itemize}
\item[(a)] For all $\sigma> q_2\alpha +2.6$, we have $\etalpha(s)\ne 0$.
\item[(b)] If $q_3 \alpha + 4\log3 < q_2 \alpha - 2$, then $\etalpha(s)\ne 0$ for
\[
q_3 \alpha + 4\log3 \le\sigma\le q_2 \alpha  - 2.
\]
\item[(c)] If $M\in\N$, $M>3$, and 
$q_M \alpha+(M+1)\mc\le q_{M-1} \alpha -M\mc$, then $\etalpha(s)\ne 0$ in the regions 
\[
q_M \alpha+(M+1)\mc\le\sigma\le q_{M-1} \alpha -M\mc, 
\] 
where $\mc\in(0,\infty)$ is a solution of $1-\frac{1}{e^\mc-1}-\frac{1}{e^{\mc}}(1+\frac{1}{\mc})\ge0$.  
\end{itemize}
\end{theorem}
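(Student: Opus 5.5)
The plan is a dominance argument. On each vertical strip we single out one term $Q^\alpha_M(\sigma)$ of \eqref{eq Q} and show it exceeds the sum of the moduli of all other terms. Since $|(-1)^{n-1}Q^\alpha_n(s)|=Q^\alpha_n(\sigma)$, the triangle inequality then gives
\[
|\etalpha(s)|\ge Q^\alpha_M(\sigma)-\sum_{n\ne M}Q^\alpha_n(\sigma)>0 .
\]
So everything reduces to real inequalities in $\sigma$ and $\alpha$. The key identity is
\[
\frac{Q^\alpha_n(\sigma)}{Q^\alpha_M(\sigma)}=\exp\Bigl(\alpha\log\tfrac{\log n}{\log M}-\sigma\log\tfrac nM\Bigr).
\]
By the definition of $q_M$, we have $Q^\alpha_{M+1}(\sigma)/Q^\alpha_M(\sigma)=\exp\bigl(-(\sigma-q_M\alpha)\log\frac{M+1}{M}\bigr)$. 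The distance of $\sigma$ from the crossover abscissae $q_M\alpha$ therefore controls the ratios of neighbouring terms.

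For (a) take $M=2$. For $n\ge3$, use $\log(\log n/\log2)\le \frac{\log(n/2)}{\log(3/2)}\log\frac{\log3}{\log2}=q_2\log(n/2)$; this is a concavity or monotonicity argument for $x\mapsto\log\log x$ against $\log x$. It gives
\[
\frac{Q^\alpha_n}{Q^\alpha_2}\le (n/2)^{-(\sigma-q_2\alpha)}.
\]
With $\delta=\sigma-q_2\alpha>2.6$, the tail is then bounded by $\sum_{n\ge3}(2/n)^{\delta}=2^\delta(\zeta(\delta)-1-2^{-\delta})$. One checks numerically that this is $<1$ at $\delta=2.6$, and it is decreasing in $\delta$.

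For (c), fix $M>3$ and bound the terms on each side separately. For $n>M$ the same concavity comparison gives
\[
\frac{Q^\alpha_n}{Q^\alpha_M}\le\Bigl(\frac{M}{n}\Bigr)^{\sigma-q_M\alpha}\le \exp\Bigl(-(n-M)\tfrac{\sigma-q_M\alpha}{n}\Bigr).
\]
With $\sigma-q_M\alpha\ge(M+1)\mc$ this yields roughly a geometric series $\sum_{j\ge1}e^{-j\mc}=\frac{1}{e^\mc-1}$.

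For $n<M$ the slopes reverse. One gets $Q^\alpha_n/Q^\alpha_M\le (M/n)^{-(q_{M-1}\alpha-\sigma)}$, and with $q_{M-1}\alpha-\sigma\ge M\mc$ each such term is at most $e^{-(M-n)\mc}$-ish. Here I would compare the sum to $e^{-\mc}(1+1/\mc)$ via an integral bound $\int_0^\infty e^{-\mc x}\,dx$ plus the first term.

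The assumed inequality $1-\frac1{e^\mc-1}-\frac1{e^\mc}(1+\frac1\mc)\ge0$ is then exactly the dominance condition. Part (b) is the case $M=3$, handled the same way but with the explicit constant $4\log 3$ replacing $(M+1)\mc$ and $2$ replacing $M\mc$. Here the few lower terms ($n=2$) and the upper tail are bounded directly and numerically.

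The main obstacle is making the comparisons $\log(\log n/\log M)$ versus $q_M\log(n/M)$ (and their analogues for $n<M$) precise and uniform in $n$. This needs concavity of $\log\log e^{x}$ in $x$, so that the chord slopes $q$ decrease. It also requires converting $(M/n)^{\delta}$ into an exponential in $n-M$ with the right constants so that the geometric and integral estimates produce exactly the stated condition on $\mc$. I would first try the elementary inequality $\log(n/M)\ge (n-M)/n$ and adjust the thresholds $(M+1)\mc$, $M\mc$ to absorb the losses.
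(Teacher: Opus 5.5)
Your framework is the right one: single-term dominance, the triangle inequality, and chord-slope comparisons between $\log\frac{\log n}{\log M}$ and $q\log\frac nM$ from concavity of $x\mapsto\log x$ on the $\log n$ scale. Parts (a) and (b) go through as you sketch. At $\delta=2.6$ one has $\sum_{n\ge3}(2/n)^\delta\approx0.85$. In (b) the head is at most $(2/3)^2$ and the tail is about $0.5$.

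The gap is the tail $n>M$ in (c). The inequality $\log(n/M)\ge(n-M)/n$ turns $(M/n)^{(M+1)\mc}$ into $\exp\bigl(-j(M+1)\mc/(M+j)\bigr)$ with $j=n-M$. These terms tend to $e^{-(M+1)\mc}>0$ as $j\to\infty$, so your series diverges. There is no geometric series here, because the tail of a Dirichlet series decays only polynomially in $n$.

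Nor can a termwise bound by $e^{-j\mc}$ be recovered from the exact estimate $(M/(M+j))^{(M+1)\mc}$. Its exponent is $-j\mc+\frac{\mc\,j(j-2)}{2M}+O(j^3/M^2)$, so for large $M$ it exceeds $e^{-j\mc}$ when $3\le j\ll M$. Moreover $\sum_{n>M}(M/n)^{(M+1)\mc}\to\frac{1}{e^\mc-1}$ as $M\to\infty$, so the bound you need is asymptotically sharp. At $\mc=1.1879\ldots$ the hypothesis on $\mc$ is an equality, and the thresholds $(M+1)\mc$, $M\mc$ are fixed by the statement. So there is no slack to ``absorb the losses.''

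The remedy is to swap the two estimates. This is how the paper proceeds: its bounds $H_M^\alpha/Q_M^\alpha\le\frac{1}{e^\mc-1}$ and $\frac{Q^\alpha_{M+1}}{Q^\alpha_M}(1+R)\le\frac{1}{e^\mc}(1+\frac1\mc)$ (indices shifted) are quoted in the proof of Lemma~\ref{lemline eta}.

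The head is a finite sum, and your own estimate already works termwise. You have $Q_n^\alpha/Q_M^\alpha\le(n/M)^{M\mc}\le e^{-(M-n)\mc}$, so $H_M^\alpha/Q_M^\alpha<\sum_{j\ge1}e^{-j\mc}=\frac{1}{e^\mc-1}$ strictly. No integral is needed there.

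For the tail, write $T_{M+1}^\alpha/Q_M^\alpha=\frac{Q_{M+1}^\alpha}{Q_M^\alpha}\bigl(1+T^\alpha_{M+2}/Q^\alpha_{M+1}\bigr)$.
\begin{itemize}
\item The first factor is at most $(1-\frac{1}{M+1})^{(M+1)\mc}\le e^{-\mc}$.
\item Since $\sigma-q_{M+1}\alpha\ge\sigma-q_M\alpha\ge(M+1)\mc$, the bracketed ratio is at most $\sum_{n\ge M+2}\bigl(\frac{M+1}{n}\bigr)^{(M+1)\mc}$.
\item Bounding that sum by its first term plus an integral gives at most $(1+\frac{1}{M+1})^{-(M+1)\mc}\bigl(1+\frac{M+2}{(M+1)\mc-1}\bigr)$. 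This is about $0.75<1/\mc\approx0.84$ at $M=4$, and it decreases in $M$.
\end{itemize}
Adding head and tail gives exactly the condition $1-\frac{1}{e^\mc-1}-\frac{1}{e^\mc}(1+\frac1\mc)\ge0$. Strict positivity of $|\etalpha(s)|$ comes from the strict inequality for the finite head.
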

The value of $\mc\in(0,\infty)$ that gives us the widest zero-free regions is $\mc=1.1879426249\dots$, which is the solution of the equation 
\[
1-\frac{1}{e^\mc-1}-\frac{1}{e^{\mc}}\left(1+\frac{1}{\mc}\right)=0.
\]

The main argument of our proof runs along lines parallel to those we have used in \cite{fps2}, see Figure \ref{e-frac-rouche}.  
We know that $|\etalpha(s)| \geq |\zetalpha(s)|$. Let $S_M^\alpha$ be the vertical strip between the zero-free regions obtained from
the dominance of $Q_M^{\alpha}(q_M \alpha)$ and $Q_{M+1}^{\alpha}(q_M \alpha)$ in (\ref{eq Q}), respectively, 
as described in Theorem \ref{thmone}. The strip $S_M^\alpha$ exists when $\alpha$ reaches 
\[
A_M:=\left\{
\begin{array}{ll}
\frac{4\log3+2}{q_2-q_3} & \mbox{ if }M=2\\[1ex]
\frac{(2M+3)\mc}{q_M-q_{M+1}} &\mbox{ if }M>2.
\end{array}\right.
\]

\begin{figure}
    \centering
    \includegraphics[width=.9\textwidth]{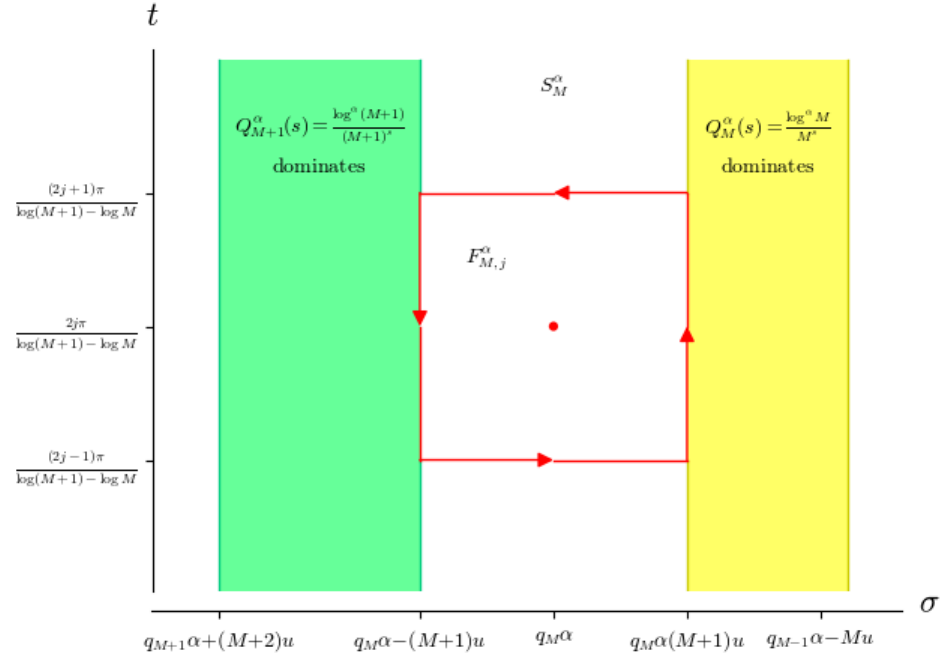}
    \caption{Regions $F_{M,j}^{\alpha}$ containing exactly one zero of $\etalpha(\sigma+it)$. Rouch{\'e}'s theorem is used to prove its simplicity using the zero of $Q^{\alpha}_{M}(s)-Q^{\alpha}_{M+1}(s)$ represented by ${\color{red}\bullet}$.}
    \label{e-frac-rouche}
\end{figure}

Recall that $Q_M^{\alpha}(q_M \alpha)=Q_{M+1}^{\alpha}(q_M \alpha)$.
Considering the imaginary parts of the solutions of $Q_M^{\alpha}(q_M \alpha+it)-Q_{M+1}^{\alpha}(q_M \alpha+it)=0$ we find
that $\etalpha(\sigma+it)\ne 0$ for $\sigma\in S_M^\alpha$ and
\begin{equation}\label{eqconjt}
t = \frac{\pi (2J+1)}{\log(M+1)-\log(M)}
\end{equation} 
for $J\in\Z$.
Together with the border of the zero-free regions to the left and right of $S_M^\alpha$ the lines from (\ref{eqconjt}), 
for $J=j$ and $J=j-1$, where $j\in\Z$ form a contour around the zero 
\begin{equation*}
q_M\cdot\alpha+\frac{2\pi j}{\log(M+1)-\log(M)}\,i
\end{equation*}
of $Q_M^{\alpha}(q_M \alpha+it)-Q_{M+1}^{\alpha}(q_M \alpha+it)$.
Just as in \cite{bps}, Rouch\'e's theorem can be also used to show that there is exactly one
zero of the fractional derivatives $\etalpha(s)$ in the rectangular area shown in Figure \ref{e-frac-rouche}. In other words, a natural generalization of \cite[Theorem 2.2]{bps} can be obtained, {\em mutatis mutandis}, 
by replacing integer values of $k$ by positive real numbers $\alpha$:

\begin{theorem}\label{thmboxzeroeta} 
Let $M\ge2$ denote a natural number, $j\in\Z$, and $\alpha>A_M$.
Let $F^\alpha_{M,j}\subset S_M^\alpha$ be given by 
\(
\frac{\pi (2j-1)}{\log(M+1)-\log(M)}<t<\frac{(2j+1)\pi }{\log(M+1)-\log(M)}.
\)
Then $F^\alpha_{M,j}$ contains exactly one zero of $\etalpha(s)$, and the zero is simple.
\end{theorem}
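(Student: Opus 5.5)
Throughout, $F^\alpha_{M,j}$ is taken to be the open rectangle bounded on the left and right by the boundary lines of the zero-free regions of Theorem~\ref{thmone} adjacent to $S_M^\alpha$, and above and below by the horizontal lines \eqref{eqconjt} with $J=j$ and $J=j-1$. The plan is to apply Rouch\'e's theorem on $\partial F^\alpha_{M,j}$, comparing $(-1)^\alpha\etalpha(s)$ with
\[
g(s):=(-1)^{M-1}\bigl(Q^\alpha_M(s)-Q^\alpha_{M+1}(s)\bigr),
\]
the two-term truncation of \eqref{eq Q} that is dominant in $S_M^\alpha$. The zeros of $Q^\alpha_M-Q^\alpha_{M+1}$ solve $(M+1)^{s-q_M\alpha}=1$. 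They are therefore the simple zeros $q_M\alpha+2\pi j i/(\log(M+1)-\log M)$, and exactly one of them lies in $F^\alpha_{M,j}$. Once the Rouch\'e inequality
\[
\bigl|(-1)^\alpha\etalpha(s)-g(s)\bigr|=\Bigl|\sum_{n\ne M,M+1}(-1)^{n-1}Q^\alpha_n(s)\Bigr|<|g(s)|
\]
is checked on all four sides, $\etalpha$ has exactly one zero in $F^\alpha_{M,j}$ counted with multiplicity. Hence that zero is simple.

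\textbf{Vertical sides.} On the right side, $\sigma=q_M\alpha-(M+1)\mc$ (or $q_2\alpha-2$ when $M=2$), the term $Q^\alpha_M$ dominates. On the left side, $\sigma=q_{M+1}\alpha+(M+2)\mc$ (or $q_3\alpha+4\log 3$), the term $Q^\alpha_{M+1}$ dominates. These are exactly the estimates that produce the zero-free regions in Theorem~\ref{thmone}(b),(c), namely
\[
Q^\alpha_M(\sigma)>\sum_{n\ne M}Q^\alpha_n(\sigma).
\]
Using $|g|\ge Q^\alpha_M(\sigma)-Q^\alpha_{M+1}(\sigma)$, the required inequality on the right side becomes $Q^\alpha_M(\sigma)>\sum_{n\ne M}Q^\alpha_n(\sigma)$. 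The left side is symmetric. So I will reuse those estimates verbatim; the hypothesis $\alpha>A_M$ guarantees that these two lines really bound a nonempty strip.

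\textbf{Horizontal sides.} Here $t$ is given by \eqref{eqconjt}, so $(M+1)^{-it}/M^{-it}=-1$. Consequently $|g(s)|=Q^\alpha_M(\sigma)+Q^\alpha_{M+1}(\sigma)$: the two terms add in modulus instead of cancelling. It then suffices to show, for every $\sigma$ in $S_M^\alpha$,
\[
Q^\alpha_M(\sigma)+Q^\alpha_{M+1}(\sigma)>\sum_{n\ne M,M+1}Q^\alpha_n(\sigma).
\]
This follows from the vertical-side estimates. For $\sigma$ in the strip, the tail $\sum_{n\ge M+2}Q^\alpha_n$ is controlled by the left-boundary estimate and the head $\sum_{n<M}Q^\alpha_n$ by the right-boundary estimate. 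Each ratio $Q^\alpha_n/Q^\alpha_M$ or $Q^\alpha_n/Q^\alpha_{M+1}$ is monotone in $\sigma$, so the worst case sits at the ends of the interval.

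\textbf{Main obstacle.} I expect the main obstacle to be the horizontal-side bound, which must hold uniformly across the strip. The key step is to split $\sigma$ at $q_M\alpha$. For $\sigma\ge q_M\alpha$, bound $\sum_{n<M}Q^\alpha_n$ by $Q^\alpha_M$ times the quantity already shown small at the right boundary, using that $(\log n/\log M)^\alpha (M/n)^\sigma$ decreases in $\sigma$ for $n<M$; bound the tail by $Q^\alpha_{M+1}$ similarly. The case $\sigma\le q_M\alpha$ is symmetric. The inequality defining $\mc$, namely
\[
1-\frac{1}{e^\mc-1}-\frac{1}{e^\mc}\Bigl(1+\frac1\mc\Bigr)\ge0,
\]
is exactly the geometric-series-plus-integral bound for these head and tail sums, so the same computation as in \cite{bps} should go through with $k$ replaced by $\alpha$. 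The sign changes $(-1)^{n-1}$ are harmless because only absolute values are used.
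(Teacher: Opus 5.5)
Your strategy is the paper's: Rouch\'e on $\partial F^\alpha_{M,j}$ against $Q^\alpha_M-Q^\alpha_{M+1}$. The vertical sides are handled by the dominance estimates behind Theorem~\ref{thmone}, and the horizontal sides by the fact that the two terms reinforce each other on the lines \eqref{eqconjt}. However, the rectangle you write down is in the wrong place, and there the argument fails.

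The strip $S_M^\alpha$ is the gap \emph{between} two zero-free regions: the one where $Q^\alpha_{M+1}$ dominates and the one where $Q^\alpha_M$ dominates. It is therefore centred at $q_M\alpha$. Its left edge is $\sigma_L=q_M\alpha-(M+1)u$ (or $q_2\alpha-2$ if $M=2$). Its right edge is $\sigma_R=q_M\alpha+(M+1)u$ (or $q_3\alpha+4\log 3$ if $M=3$, and $q_2\alpha+2.6$ if $M=2$).

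Your lines $\sigma=q_{M+1}\alpha+(M+2)u$ and $\sigma=q_M\alpha-(M+1)u$ are instead the two edges of the $Q^\alpha_{M+1}$ zero-free region itself. For $M=2$, your lines bound exactly the region of Theorem~\ref{thmone}(b). So your rectangle contains no zero of $\etalpha$ at all. Concretely, on your ``right side'' $\sigma=q_M\alpha-(M+1)u<q_M\alpha$ one has $Q^\alpha_M(\sigma)/Q^\alpha_{M+1}(\sigma)=((M+1)/M)^{\sigma-q_M\alpha}<1$. Hence the claimed estimate $Q^\alpha_M(\sigma)>\sum_{n\ne M}Q^\alpha_n(\sigma)$ is false there. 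Also, the zero $q_M\alpha+2\pi j i/(\log(M+1)-\log M)$ of $g$ lies outside your rectangle, contrary to what you assert. You seem to have read $S_M^\alpha$ off the condition $\alpha>A_M$, which is indeed the nonemptiness condition for the $Q^\alpha_{M+1}$ region. But $S_M^\alpha$ is the gap next to that region, not the region itself.

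Once the vertical sides are moved to $\sigma_L$ and $\sigma_R$, your argument works, but the monotonicity claim needs fixing. For $n<M$ the ratio $(\log n/\log M)^\alpha(M/n)^\sigma$ \emph{increases} in $\sigma$, since $M/n>1$. For $n\ge M+2$ the ratio $Q^\alpha_n/Q^\alpha_{M+1}$ decreases. This is exactly what you need. On all of $[\sigma_L,\sigma_R]$:
\begin{itemize}
\item $\sum_{n<M}Q^\alpha_n(\sigma)/Q^\alpha_M(\sigma)$ is at most its value at $\sigma_R$, which is $<1$ because $Q^\alpha_M$ dominates there;
\item $\sum_{n\ge M+2}Q^\alpha_n(\sigma)/Q^\alpha_{M+1}(\sigma)$ is at most its value at $\sigma_L$, which is $<1$ because $Q^\alpha_{M+1}$ dominates there.
\end{itemize}
Hence $\sum_{n\ne M,M+1}Q^\alpha_n(\sigma)<Q^\alpha_M(\sigma)+Q^\alpha_{M+1}(\sigma)=|g(s)|$ on both horizontal sides, and no split at $q_M\alpha$ is needed.

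Corrected this way, your horizontal-side step is cleaner than the paper's Lemma~\ref{lemline eta}. That lemma bounds real and imaginary parts separately, loses a factor $1/\sqrt2$, and so needs the explicit $u$-estimates. Your exact identity $|Q^\alpha_M(s)-Q^\alpha_{M+1}(s)|=Q^\alpha_M(\sigma)+Q^\alpha_{M+1}(\sigma)$ on these lines makes plain dominance at the two vertical sides sufficient. One minor slip: the zeros of $Q^\alpha_M-Q^\alpha_{M+1}$ solve $((M+1)/M)^{s-q_M\alpha}=1$, not $(M+1)^{s-q_M\alpha}=1$.
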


Computations suggest that the zeros in the regions $F_{M,j}^\alpha$ 
form continuous, mostly horizontal path.
We observe that the paths of zeros of fractional derivatives passing through the regions 
$F_{M,j}^\alpha$ (with $j>0$) on the left end at zeros of $\eta(s)-1$, where $-\frac{1}{2}<\Re(s)<1.9402$, discussed in Theorem \ref{sch eta}.

Far enough to the right the existence of these path of zeros is a direct consequence of Theorem \ref{thmboxzeroeta}:
Let $M\in\Z$, $M\ge 2$ and $\alpha>A_M$ so that $S_M^\alpha$ is non empty.
Then for each $j\in\Z$ there is $s=\sigma+it\in F^\alpha_{M,j}$ such that $\etalpha(s)=0$.
Because $s$ is a simple zero of $\etalpha(s)$ we have that $\eta^{(\alpha+1)}(s)\ne 0$.
By the implicit function theorem there is an analytic function $z$ defined 
on an open neighborhood $U\subset\C$ of $\alpha$ such that $\eta^{(\beta)}(z(\beta))=0$ for $\beta\in U$.
As this holds for all $\alpha>A_M$ we obtain a
function $z$ that is analytic on an open neighborhood of 
$(A_M,\infty)$ in $\C$
and thus analytic on $(A_M,\infty)$.

\begin{corollary}\label{cor curve eta}
Let $M\in\N$ with $M\ge 2$ and $j\in\Z$.
The zeros $s=\sigma+it$ 
of $\etalpha(s)$ 
for $\alpha>A_M$ 
with 
\[ 
\frac{\pi (2j+1)}{\log(M+1)-\log(M)}<t<\frac{\pi2j}{\log(M+1)-\log(M)}
\]
are images of an analytic function
\(
z:(A_M,\infty)\to\C.
\)
\end{corollary}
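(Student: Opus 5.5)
The plan is to build the function $z$ by analytic continuation in $\alpha$, using Theorem~\ref{thmboxzeroeta} to get existence, uniqueness and simplicity of the zero at each $\alpha$, and the implicit function theorem to glue the local pieces together. Fix $M\ge 2$ and $j\in\Z$; the $t$-range in the statement is the horizontal strip cut out by the lines \eqref{eqconjt}, i.e.\ the $t$-range of $F^\alpha_{M,j}$ (up to the indexing of $J$). Since $S_M^\alpha$ is nonempty for every $\alpha>A_M$, for each such $\alpha$ let $z(\alpha)$ be the unique zero of $\etalpha$ in $F^\alpha_{M,j}$. This defines a function $z:(A_M,\infty)\to\C$ whose image consists exactly of the zeros of $\etalpha$ in the region described. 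Everything then reduces to showing that $z$ is real-analytic.

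First I would check that $F(\beta,s):=\eta^{(\beta)}(s)$ is jointly analytic in $(\beta,s)$ near each point $(\alpha,z(\alpha))$. For $\Re(s)>0$ this follows from the Dirichlet series: write $(-1)^\beta\log^\beta n=e^{\beta(\log\log n+i\pi)}$, and note that the series converges locally uniformly in both variables, so it is holomorphic in $\beta$ for $\Re\beta>0$. In the region considered, $\sigma\ge q_M\alpha-O(1)$, which is large, so this suffices; elsewhere one can use the power series \eqref{eta power}. Next, since $z(\alpha)$ is a simple zero, $\partial_s F(\alpha,z(\alpha))=-\eta^{(\alpha+1)}(z(\alpha))\ne0$, up to the sign convention. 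The holomorphic implicit function theorem then gives a neighbourhood $U$ of $\alpha$ and a holomorphic $w$ on $U$ with $w(\alpha)=z(\alpha)$ and $F(\beta,w(\beta))=0$.

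The key step is to show that $w(\beta)=z(\beta)$ for real $\beta\in U$ near $\alpha$; otherwise the local branches need not assemble into the globally defined $z$. I would argue as follows. The point $z(\alpha)$ lies in the open set $F^\alpha_{M,j}$, and the boundaries of $S_M^\beta$ move continuously (linearly) in $\beta$. Hence by continuity $w(\beta)\in F^\beta_{M,j}$ for $\beta$ close to $\alpha$, and uniqueness in Theorem~\ref{thmboxzeroeta} forces $w(\beta)=z(\beta)$. The one point to verify is that $z(\alpha)$ is actually interior, not on the boundary of $F^\alpha_{M,j}$. This holds because the boundary pieces are zero-free by Theorem~\ref{thmone} and by the nonvanishing on the lines \eqref{eqconjt}, so the zero stays a positive distance from them. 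Thus $z$ agrees locally with a holomorphic function, so $z$ is analytic on a complex neighbourhood of each point of $(A_M,\infty)$, and therefore on all of $(A_M,\infty)$.

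I expect the main obstacle to be the joint analyticity in $\beta$, including the choice of branch of $(-1)^\beta$, together with uniform control near the boundary of $F^\alpha_{M,j}$. The tool for the latter is a Rouché argument uniform in $\beta$, as in the proof of Theorem~\ref{thmboxzeroeta}.
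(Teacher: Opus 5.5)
Your proposal is correct and follows the paper's own argument: Theorem~\ref{thmboxzeroeta} gives a unique simple zero in $F^\alpha_{M,j}$ for each $\alpha>A_M$, and the implicit function theorem then yields a local analytic branch through it. Your continuity-plus-uniqueness step, showing that each local branch coincides with the unique zero in $F^\beta_{M,j}$ for nearby $\beta$, makes explicit the gluing that the paper only asserts with ``as this holds for all $\alpha>A_M$''. Your reading of the (as printed, empty) $t$-range as the $t$-range of $F^\alpha_{M,j}$ matches the paper's intent.
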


\subsection*{Proofs of Two Theorems}

Before we give a proof of Theorem 3.1, we need one auxiliary lemma:

\begin{lemma}\label{lemline eta}
Let $M \geq 2$ and $\alpha \in \mathbb{R}$. If $s \in S_M^\alpha$ (defined above), then $\etalpha(s)\ne 0$ for
\[
s=\sigma+i\cdot\frac{\pi (2j+1)}{\log(M+1)-\log M}.
\]
\end{lemma}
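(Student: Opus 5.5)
On the line $t_j=\pi(2j+1)/(\log(M+1)-\log M)$ we have $(M+1)^{-it_j}=-M^{-it_j}$, since $t_j\log\frac{M+1}{M}=(2j+1)\pi$. In the series \eqref{eq Q} the terms $n=M$ and $n=M+1$ carry opposite signs $(-1)^{M-1}$ and $(-1)^{M}$, so on this line they \emph{add} rather than cancel:
\[
(-1)^{M-1}Q^\alpha_M(s)+(-1)^{M}Q^\alpha_{M+1}(s)=(-1)^{M-1}M^{-it_j}\bigl(Q^\alpha_M(\sigma)+Q^\alpha_{M+1}(\sigma)\bigr).
\]
The plan is to show that this combined pair dominates all the remaining terms:
\[
Q^\alpha_M(\sigma)+Q^\alpha_{M+1}(\sigma)>\sum_{n\ne M,M+1}Q^\alpha_n(\sigma)\qquad(\sigma\in S_M^\alpha).
\]
The triangle inequality then gives $\etalpha(s)\ne0$ on the line.

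To prove the dominance I split the remainder into a tail $n\ge M+2$ and a head $2\le n\le M-1$. The estimates should mirror those in the proof of Theorem~\ref{thmone}, now applied to the two-term block. Write $\sigma=q_M\alpha+x$. Because $\sigma\in S_M^\alpha$, we have $\sigma\ge q_{M+1}\alpha+(M+2)\mc$ (or the $M=2$ analogue with constants $4\log 3$ and $2$), and $\sigma\le q_{M-1}\alpha-M\mc$.

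For the tail, the ratio $Q^\alpha_{n+1}(\sigma)/Q^\alpha_n(\sigma)=(\log(n+1)/\log n)^\alpha (n/(n+1))^\sigma$ is already small once $n\ge M+1$. Indeed, $\sigma$ exceeds $q_{M+1}\alpha$ by $(M+2)\mc$, and $(n/(n+1))^{(M+2)\mc}\le e^{-\mc}$ roughly. So the tail is bounded by a geometric-type sum times $Q^\alpha_{M+1}(\sigma)$. For the head, $\sigma\le q_{M-1}\alpha-M\mc$ forces $Q^\alpha_n(\sigma)$ for $n<M$ to decay relative to $Q^\alpha_M(\sigma)$ in a similar geometric fashion, since moving left past $q_{n}$ costs a factor $e^{-\mc}$ per step. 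The choice of $\mc$ satisfying $1-\frac{1}{e^\mc-1}-\frac{1}{e^\mc}(1+\frac1\mc)\ge0$ is exactly what makes each of these geometric sums (including an integral comparison producing the $1/\mc$ term) stay below the dominant term.

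In fact, Theorem~\ref{thmone} already contains a usable form of both inequalities. On the right boundary of $S_M^\alpha$, $Q_{M+1}$ dominates its own complement; on the left boundary, $Q_M$ does. Since ratios of $Q$'s are monotone in $\sigma$, I would show that the head sum is at most $Q^\alpha_M(\sigma)$ and the tail sum is at most $Q^\alpha_{M+1}(\sigma)$ for every $\sigma$ inside the strip. Adding these gives the strict inequality, with strictness coming from the strict inequality in Theorem~\ref{thmone} or from discarding one positive term.

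The main obstacle is making the head and tail bounds hold uniformly across the whole strip rather than only at its edges. This needs monotonicity in $\sigma$ of $\sum_{n\le M-1}Q^\alpha_n(\sigma)/Q^\alpha_M(\sigma)$, which increases as $\sigma$ decreases, and of $\sum_{n\ge M+2}Q^\alpha_n(\sigma)/Q^\alpha_{M+1}(\sigma)$, which increases as $\sigma$ decreases. For the tail, the worst case is therefore the left edge $\sigma=q_{M+1}\alpha+(M+2)\mc$, where the bound comes from the estimates in part (c) of Theorem~\ref{thmone} for index $M+1$. For the head, the worst case is at $\sigma=q_{M-1}\alpha-M\mc$, where it comes from the $M$-dominance estimate. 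I expect the verification that these edge estimates (and the separate $M=2$ case with constants $4\log3$ and $2$) indeed give head $\le Q_M$ and tail $\le Q_{M+1}$ to be the only genuinely computational step.
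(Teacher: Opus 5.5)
Your proposal is correct and follows the paper's proof. The $n=M$ and $n=M+1$ terms add constructively on this line, and the head and tail are bounded against $Q_M^\alpha$ and $Q_{M+1}^\alpha$ at $\sigma=q_{M-1}\alpha-M\mc$ and $\sigma=q_{M+1}\alpha+(M+2)\mc$ by monotonicity of the ratios, with bounds $1/(e^\mc-1)$ and $e^{-\mc}(1+1/\mc)$. Your direct triangle inequality on the modulus is cleaner than the paper's split into real and imaginary parts, which loses a factor $1/\sqrt2$.

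Two directional slips need fixing:
\begin{itemize}
\item $Q_M^\alpha$, not $Q_{M+1}^\alpha$, dominates at the right boundary of $S_M^\alpha$.
\item The head ratio $\sum_{n<M}Q_n^\alpha/Q_M^\alpha$ increases with $\sigma$, since $(M/n)^\sigma$ grows. That is exactly why its worst case is the right edge $q_{M-1}\alpha-M\mc$, which you did choose correctly.
\end{itemize}
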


\begin{proof}
In the center of the strip $S_M^\alpha$, that is on the line $\sigma=q_M\alpha$ we have $|Q_M^\alpha(s)|=|Q_{M+1}^\alpha(s)|$.
We consider the line segments in $S_M^\alpha$ with
\[
q_M \alpha-(M+1)\mc\le \sigma \le q_M \alpha+(M+1)\mc.
\]
and
\[
t=\frac{\pi (2j+1)}{\log(M+1)-\log M}, \mbox{ where }j\in\Z,
\]
see \ourref{Figure}{e-frac-rouche}. Everywhere hereafter we write $H_M^{\alpha}(s)$ for the "head" and $T_M^{\alpha}(s)$ for the "tail" of the series $\etalpha(s)$ split by $Q_M^{\alpha}(s)$:
\[H_M^{\alpha}(s) := \sum_{n=2}^{M-1}|Q_n^{\alpha}(s)|
\text{ and }
    T_M^{\alpha}(s):= \sum_{n=M+1}^{\infty}|Q_n^{\alpha}(s)|. 
\]


Our choice of $t$ gives $Q_M^\alpha(q_M \alpha+it)-Q_{M+1}^\alpha(q_M \alpha+it)=0$ (cf. (\ref{eqconjt})) 
and therefore $\cos(t\log M)=-\cos(t\log(M+1))$ and $\sin(t\log M)=\sin(t\log(M+1))$.
We set $s=\sigma+it$, with $t$ and $\sigma$ as above, and consider the real and imaginary parts of the expression
\[
\etalpha(s)=\sum_{n=2}^\infty (-1)^{n-1}\left(\cos(t\log n)-i\cdot\sin(t\log n)\right)Q_n^\alpha(\sigma).
\]
With $|\Im(Q_{n}^\alpha(s)|\le Q_n^\alpha(\sigma)$ and $|\Re(Q_{n}^\alpha(s)|\le Q_n^\alpha(\sigma)$ we obtain
\begin{align*}
|\Re(\etalpha(s))|\ge& |\cos(t\log M)Q_M^\alpha(\sigma)+\cos(t\log(M+1))Q_{M+1}^\alpha(\sigma)|\\&-H_M^\alpha(\sigma)-T_{M+1}^\alpha(\sigma),\\
|\Im(\etalpha(s))|\ge& |\sin(t\log M)Q_M^\alpha(\sigma)+\sin(t\log(M+1))Q_{M+1}^\alpha(\sigma)|\\&-H_M^\alpha(\sigma)-T_{M+1}^\alpha(\sigma).
\end{align*}

If $t = 0$, the situation is trivial.  
If $t\ne 0$, then we either have $|\sin(t\log M)|\ge\sin(\pi/4)=1/\sqrt{2}$ or $|\cos(t\log M)|\ge\cos(\pi/4)=1/\sqrt{2}$.
Because $|\etalpha(s)|\ge |\Re(\etalpha(s))|$ and $|\etalpha(s)|\ge |\Im(\etalpha(s))|$ we get:
\begin{eqnarray*}
|\etalpha(s)|
&\!\!\!\ge\!\!\!& \frac{1}{\sqrt{2}}\left(Q_M^\alpha(\sigma)+Q_{M+1}^\alpha(\sigma)\right)-H_M^\alpha(\sigma)-T_{M+1}^\alpha(\sigma)\\
&\!\!\!=\!\!\!&   Q_M^\alpha(\sigma)\left(\frac{1}{\sqrt{2}}+\frac{1}{\sqrt{2}}\frac{Q_{M+1}^\alpha}{Q_M^\alpha}(\sigma)
-\frac{H_M^\alpha}{Q_M^\alpha}(\sigma)-\frac{Q_{M+2}^\alpha}{Q_M^\alpha}(\sigma)-\frac{T_{M+2}^\alpha}{Q_M^\alpha}(\sigma)\right)\\
&\!\!\!=\!\!\!&   Q_M^\alpha(\sigma)\!\left(\!\frac{1}{\sqrt{2}}-\frac{H_M^\alpha}{Q_M^\alpha}(\sigma)
+\frac{Q_{M+1}^\alpha}{Q_M^\alpha}(\sigma)\!
\left(\!\frac{1}{\sqrt{2}}-\frac{Q_{M+2}^\alpha}{Q_{M+1}^\alpha}(\sigma)-\frac{T_{M+2}^\alpha}{Q_{M+1}^\alpha}(\sigma)\!\right)\!\!\right)
\end{eqnarray*}

From the proof of \ourref{Theorem}{thmone}~(b) we know that for $\sigma\ge q_{M+1}\alpha+(M+2)\mc$
and $\mc=1.1879426249\dots$ 
\begin{align*}
\frac{1}{\sqrt{2}}-\frac{Q_{M+2}^\alpha}{Q_{M+1}^\alpha}(\sigma)-\frac{T_{M+2}^\alpha}{Q_{M+1}^\alpha}(\sigma)
&\ge\frac{1}{\sqrt{2}} -\frac{Q_{M+2}^\alpha}{Q_{M+1}^\alpha}(\sigma)\left(1+R_{M+2}(\sigma)\right)\\
&\ge\frac{1}{\sqrt{2}} - \frac{1}{e^\mc}\left(1+\frac{1}{\mc}\right) > 0.
\end{align*}
Similarly, since $\frac{H_M^\alpha}{Q_M^\alpha}(\sigma)$ is increasing in $\sigma$, compare \cite[Proof of Theorem 1 (b)]{zero-free-fract}
and because $\sigma<q_{M-1}\alpha-M\mc$, we get that
\[
\frac{1}{\sqrt{2}}-\frac{H_M^\alpha}{Q_M^\alpha}(\sigma)\ge\frac{1}{\sqrt{2}}-\frac{H_M^\alpha}{Q_M^\alpha}(q_{M-1}\alpha-M\mc)\ge\frac{1}{\sqrt{2}}-\frac{1}{e^\mc-1}>0,
\]
which concludes the proof of the lemma.
\end{proof}

\begin{proof}[Proof of \ourref{Theorem}{thmboxzeroeta}]
Let $Z(s)=Q_M^\alpha(s)-Q_{M+1}^\alpha(s)$.
It is easy to check that the function $Z(s)$ has exactly one (simple) zero in $R_j$, namely
\[
s=q_M \alpha+ i\cdot\frac{2\pi j}{\log(M+1)-\log M}.
\]
In order to be able to apply Rouch\'e's Theorem we need to show that $|\etalpha(s)-Z(s)|<|Z(s)|$ for all $s$ on $R_j$.
The vertical sides of $R_j$ are in the zero-free regions for $M$ and $M+1$.
As shown in the proof of \ourref{Theorem}{thmone} 
the term $Q_M^\alpha(s)$ dominates $\etalpha(s)$
on the right vertical side of $R_j$
and 
the term $Q_{M+1}^\alpha(s)$ dominates $\etalpha(s)$
on the left vertical side of $R_j$.  Thus $|\etalpha(s)-Z(s)|<|Z(s)|$ on the vertical sides of $R_j$.
Furthermore we have seen in the proof of \ourref{Lemma}{lemline eta} that
$Z(s)=Q_M^\alpha(s)+Q_{M+1}^\alpha(s)$ dominates $\etalpha(s)$ on the horizontal sides of $R_j$.
Hence $|\etalpha(s)-Z(s)|<|Z(s)|$ on the horizontal sides of $R_j$.
\end{proof}

\begin{note}
We know that the fractional derivatives $\zeta^{(\alpha)}(s)$ do not have zeros on the positive real axis \cite[Proposition 1]{fps-icms}. However, the same cannot be said about the Euler $\eta$-function. From the alternating property, the imaginary part of the dominating terms used in Theorem \ref{thmboxzeroeta} shifts the boxes to include the real axis. This guarantees existence of zeros of $\etalpha(s)$ on the real axis. Because $\eta^{(k)}(s)$ has the same number of zeros as $\zeta(s)$ (see Section \ref{sec eta number}), we expect that these zeros on the real axis lie on paths of fractional derivatives that originate on the left half-plane, see Figure \ref{eta alpha}. 
\end{note}

\begin{figure}
   \centering
    \includegraphics[width=.9\textwidth]{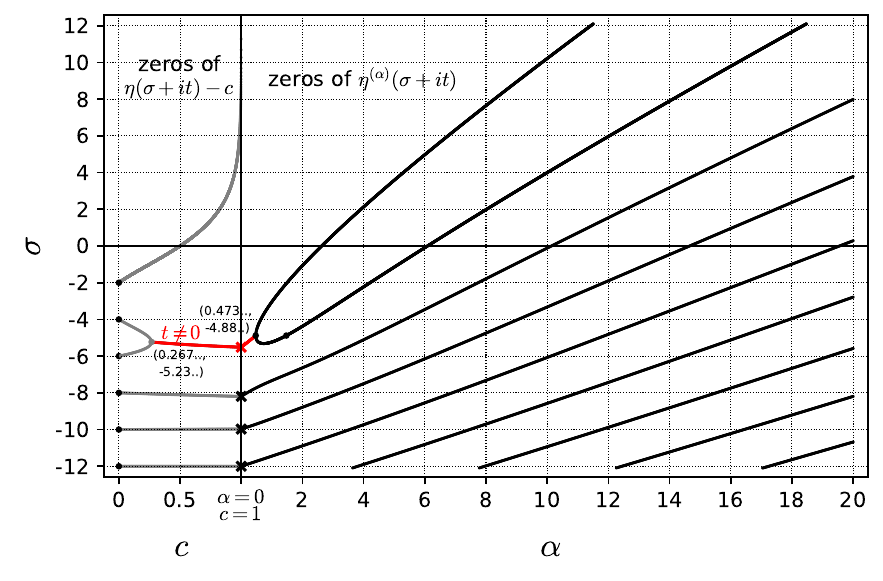}
    \caption{Paths \(s:(0,20]\to\C\) given by $\etalpha(s(\alpha))=0$ originating on the left half plane        in the $\sigma-\alpha$ plane on the right and the path \(s:(0,1]\to\C\) given by $\eta(s(c))-c=0$ in the $c-\alpha$ plane.  We denote zeros of \(\eta\) by \(\bullet\) and by {\color{gray}\textsf{x}} the zeros of $\eta-1$.  Only the path from
    \((c\approx 0.267\), \(\sigma-5.23\dots)\) to the zeros of \(\eta-1\) near \(-5.516 
    \pm 1.201 i\) 
    and from there to \(\alpha\approx 0.473\), \(\sigma\approx -4.88)\) have imaginary part not 0.}
    \label{eta alpha}
\end{figure}


\section{Critical Strip}\label{sec eta critical}

There are several theorems that relate the Riemann hypothesis to the location of the zeros of the derivatives of $\zeta(s)$ (see, for example, \cite{s:5,lm:1}). 
Here we prove a new relation of this type for the function $\eta(s)$.
\begin{theorem}
The Riemann hypothesis implies $\eta'(s)\ne 0$ for $0< \sigma< 1/2$.
\end{theorem}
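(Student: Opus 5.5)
The plan is to reduce the statement to a sign condition on the logarithmic derivative of $\zeta$. By \eqref{altern zeta}, $\eta(s)=(1-2^{1-s})\zeta(s)$. Under the Riemann hypothesis, $\zeta$ has no zeros in $0<\sigma<1/2$, and the factor $1-2^{1-s}$ vanishes only on $\sigma=1$. Hence $\eta(s)\ne0$ in the strip, and there $\eta'(s)=0$ is equivalent to
\[
\frac{\zeta'}{\zeta}(s)=-\frac{2^{1-s}\log 2}{1-2^{1-s}}=-\frac{\log 2}{w-1},\qquad w:=2^{s-1}.
\]
For $\sigma<1/2$ we have $|w|<2^{-1/2}<1$, so $\Re(w-1)<0$. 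Therefore $\Re\bigl(1/(w-1)\bigr)=\Re(w-1)/|w-1|^2<0$, and the right-hand side has strictly positive real part. It therefore suffices to show that, on RH, $\Re\frac{\zeta'}{\zeta}(s)\le 0$ for $0<\sigma<1/2$, at least away from a bounded region.

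For this I would use the Hadamard product of $\xi(s)=\frac12 s(s-1)\pi^{-s/2}\Gamma(s/2)\zeta(s)$. Pairing $\rho$ with $\bar\rho$ gives the standard identity $\Re\frac{\xi'}{\xi}(s)=\sum_\rho\Re\frac{1}{s-\rho}$. Under RH each term equals $(\sigma-\tfrac12)/|s-\rho|^2<0$ for $\sigma<1/2$. Then
\[
\Re\frac{\zeta'}{\zeta}(s)=\Re\frac{\xi'}{\xi}(s)-\Re\frac1s-\Re\frac1{s-1}+\tfrac12\log\pi-\tfrac12\Re\psi(s/2).
\]
By Stirling, $\Re\psi(s/2)=\log(|t|/2)+O(1/|t|)$, so for $|t|\ge t_0$ (with $t_0$ roughly $2\pi$ or so) the elementary terms sum to something negative. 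This is the Levinson--Montgomery/Spira argument, which gives $\Re\frac{\zeta'}{\zeta}(s)<0$ in $0<\sigma<1/2$, $|t|\ge t_0$. Combined with the first paragraph, this yields $\eta'(s)\ne0$ there.

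The main obstacle is the bounded box $0<\sigma<1/2$, $|t|<t_0$. There the sign fails: on the real segment $\zeta<0$ and $\zeta'<0$, so $\zeta'/\zeta>0$. On the real axis itself I would argue directly. Indeed $\eta(0)=1/2$ and $\eta(1)=\log2$, and $\eta$ is positive with $\eta'>0$ on $(0,1)$, which can be checked from the integral representation $\eta(s)\Gamma(s)=\int_0^\infty t^{s-1}/(e^t+1)\,dt$ or from $\eta'(0)=\frac12\log(\pi/2)>0$ plus monotonicity. For the rest of the box, I would bound $\Re\frac{\xi'}{\xi}$ using the nearest zeros (all with $|\gamma|\ge14.13$) together with explicit values of $\psi$. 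Alternatively, since it is a compact region with no RH dependence (no zeros of $\zeta$ have height below $14$), I would use an argument-principle/Rouché computation of $\eta'$ on the box boundary to certify that there are no zeros. This finite verification is where I expect the real work to lie.
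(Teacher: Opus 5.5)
Your argument is correct as far as it goes, and it takes a different and cleaner route than the paper.

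\textbf{Comparison with the paper.} The paper reflects $s\mapsto 1-s$ through the functional equation. It expresses $-\frac{\eta'}{\eta}(1-s)$ for $\frac12<\Re s<1$ through $\frac{\zeta'}{\zeta}(s)$, $\psi(s)$, $\tan\frac{\pi s}{2}$ and a $2$-factor term. It then inserts the Hadamard expansion, whose zero-sum has positive real part there under RH, and bounds everything else by crude absolute estimates. It reaches only $|t|\ge 164$, and its printed digamma bounds, with $|s|^k$ where $|s|^{-k}$ is meant, would need repair.

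You stay at $s$ and split $\frac{\eta'}{\eta}(s)=\frac{\zeta'}{\zeta}(s)+\frac{\log 2}{2^{s-1}-1}$. You note that the $\eta$-specific term has negative real part throughout $\sigma<1$. The statement therefore reduces directly to the Levinson--Montgomery inequality $\Re\frac{\zeta'}{\zeta}(s)<0$ for $0<\sigma<\frac12$, $|t|\ge t_0$, with $t_0$ a little above $2\pi$. The mechanism is the same (the RH sign of $\Re\frac1{s-\rho}$ plus Stirling). Your decomposition, however, avoids the tangent term, shows that passing from $\zeta$ to $\eta$ can only help, and covers far more of the strip. Your equivalence, the sign of $\Re\frac{1}{w-1}$, and the $\xi$-identity for $\Re\frac{\zeta'}{\zeta}$ are all right.

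\textbf{The remaining gap.} The compact box $0<\sigma<\frac12$, $0<|t|<t_0$ is left to a verification you have not carried out. The paper's proof has the same hole on a larger scale: it concludes only for $|t|\ge164$ and says nothing below that. Two things help close it.

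First, your real-axis step is circular as written ($\eta'(0)>0$ ``plus monotonicity''). A rigorous version comes from the integral representation. There, $\frac{\eta'}{\eta}(\sigma)+\psi(\sigma)$ is the mean of $\log x$ for the normalized measure $x^{\sigma-1}(e^x+1)^{-1}\,dx$, while $\psi(\sigma)$ is its mean for $x^{\sigma-1}e^{-x}\,dx$. The density ratio $e^x/(e^x+1)$ is increasing, so Chebyshev's integral inequality gives $\frac{\eta'}{\eta}(\sigma)>0$, hence $\eta'(\sigma)>0$, for every $\sigma>0$.

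Second, you used only the sign of the extra term. Since $\Re\frac{1}{w-1}\le-\frac{1}{1+|w|}$ for $|w|<1$, you actually have
\[
\Re\frac{\log 2}{2^{s-1}-1}\le-\frac{\log 2}{1+2^{\sigma-1}}\le-\frac{\log 2}{1+2^{-1/2}}\approx-0.406\qquad(\sigma\le\tfrac12).
\]
So it suffices that $\Re\frac{\zeta'}{\zeta}(s)<0.406$, which the same Stirling bound gives for $|t|\ge 4$ or so.

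What is left is $0<\sigma<\frac12$, $0<|t|<4$. This region contains no zeros of $\eta$ unconditionally. There a certified computation (e.g.\ the argument principle for $\eta'$ around that box) is still needed to finish the proof.
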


\begin{proof}
Combining the functional equation of \(\zeta(s)\) with (\ref{altern zeta}) and 
taking the logarithmic derivative, we get:
    \begin{align}
        -\frac{\eta'}{\eta}(1-s)&= -\log(2\pi) -\frac{1}{2}\pi \tan\left(\frac{\pi s}{2}\right) + \frac{\Gamma'}{\Gamma}(s)+\frac{\zeta'}{\zeta}(s)+\frac{2^{-s}\log 2}{(1-2^{-s})}.   \label{logarithmic eta}
    \end{align}
    Now, since for the Riemann zeta function we have (see \cite[(2.12.7)]{titchmarsh1986theory})
    \begin{align}
        \frac{\zeta'}{\zeta}(s)&= \log(2\pi)-1-\frac{\gamma}{2}- \frac{1}{s-1}-
        \frac{1}{2}\frac{\Gamma'}{\Gamma}\left(\frac{s}{2}+1\right)+\sum_{\rho}\left(\frac{1}{s-\rho}+\frac{1}{\rho}\right),
    \end{align}
    where the series runs over the complex roots of the zeta function and converges absolutely, we can write: 
    \begin{align}
        -\frac{\eta'}{\eta}(1-s)&=-\left(1+\frac{\gamma}{2}+\frac{1}{s-1}+\frac{1}{2}\pi \tan\left(\frac{\pi s}{2}\right)\right) +\frac{\log 2}{2^s-1}\notag\\
        &+ \frac{\Gamma'}{\Gamma}(s)-\frac{1}{2}\frac{\Gamma'}{\Gamma}\left(\frac{s}{2}+1\right)+\sum_{\rho}\left(\frac{1}{s-\rho}+\frac{1}{\rho}\right).
        \label{logarithmic eta long}
    \end{align}
    Since for a zero $\rho = a + bi$, we have
    \(
        \Re\left(\frac{1}{s-\rho}+\frac{1}{\rho}\right)=\frac{\sigma-a}{(\sigma-a)^2+(t-b)^2}+\frac{a}{a^2+b^2}>0,
    \)
    under the assumption of the Riemann hypothesis we can write: 
    \begin{align}\label{eta log total}
        -\frac{\eta'}{\eta}(1-s) >& \Re\left(\frac{\Gamma'}{\Gamma}(s)\right)-\Re\left(\frac{1}{2}\frac{\Gamma'}{\Gamma}\left(\frac{s}{2}+1)\right)\right)\notag\\
        &-\Re\left(1+\frac{\gamma}{2}+\frac{1}{s-1}+\frac{1}{2}\pi \tan\left(\frac{\pi s}{2}\right)\right) +\Re\left(\frac{\log 2}{2^s-1}\right).
    \end{align}
    In the plane cut along the non-positive real axis,
    \begin{align*}
        \frac{\Gamma'}{\Gamma}(s)= \log(s) -\frac{1}{2s}-\frac{1}{12s^2}+\int_0^{\infty}\frac{P_3(x)}{(s+x)^4}dx,
    \end{align*}
    where $P_3(x)$ is a function of period $1$ which is equal to 
    \(x^3-\frac{3}{2}x^2+\frac{1}{2}\) on $[0,1]$, and the $\log$ is principal. 
    We have $|P_3(x)|\leq \frac{1}{8},$ so
    \begin{align*}
        \int_0^{\infty}\frac{P_3(x)}{(s+x)^4}dx\leq \frac{1}{8}\int_0^{\infty}\frac{dx}{|s+x|^4}\leq \frac{1}{6|s|^3},
    \end{align*}
where the last inequality comes from \cite[(6)]{spira3}. Thus, 
    \begin{align*}
        \Re\left(\frac{\Gamma'}{\Gamma}(s)\right)\leq \log|s|-\frac{1}{2}|s|-\frac{1}{12}|s|^2-\frac{1}{6}|s|^3,
    \end{align*}
    and
    \begin{align*}
        \Re\left(\frac{\Gamma'}{\Gamma}\left(\frac{s}{2}+1\right)\right)\leq \log|\frac{s}{2}+1|+\frac{1}{|s+2|}+\frac{1}{3}|s+2|^2+\frac{4}{3}|s+2|^3.
    \end{align*}
    Next, note that 
    \begin{align*}
        \Re\left(\tan\left(\frac{\pi s}{2}\right)\right) \; \leq \; \left|\tan\left(\frac{\pi s}{2}\right) \right| \; \leq \; \frac{1+e^{-\pi|t|}}{1-e^{-\pi|t|}}
    \end{align*}
    by \cite[(14)]{spira3}, and this last function is monotonously decreasing with increasing $|t|$. Also,
    \begin{align*}
        \log\left(\frac{2|s|^2}{|s+2|}\right)=\log 2+\log|s|-\log \left|1+\frac{2}{s} \right|
    \end{align*}
    and
    \begin{align*}
        \log\left|1+\frac{2}{s}\right|\leq \log\left(1+\frac{2}{|s|}\right)\leq \frac{2}{|s|}.
    \end{align*}
    Finally, we have 
    \begin{align}
        \Re\left(\frac{1}{s-1}\right)\leq \left|\frac{1}{s-1}\right|\leq \frac{1}{|s|-1}.
    \end{align}
    Substituting all the results above 
    into (\ref{eta log total}), we obtain (valid for for all $|s|>1$): 
    \begin{align}\label{eta' total}
        -2\Re\left(\frac{\eta'}{\eta}(1-s)\right)>& \log|s| +\log 2 -2 -\gamma-\frac{2}{|s|}-\frac{\pi(1+e^{-\pi|t|})}{1-e^{-\pi|t|}}\notag\\
        &+\frac{2}{|s|-1}+\frac{\log 2}{2^{|s|}-1}-\frac{1}{|s|}-\frac{1}{6}|s|^2-\frac{1}{3}|s|^3\notag\\
        &- \frac{1}{|s+2|}-\frac{1}{3}|s+2|^2-\frac{4}{3}|s+2|^3.
    \end{align}
    For $|t|\geq 2$ and $|s|\geq 164,$ one can easily verify that the left hand side of (\ref{eta' total}) is greater than 0. Thus, assuming  the Riemann hypothesis, we have $\eta'(s)\ne 0$, for $0 < \sigma < \frac{1}{2}$ and $|t|\geq 164.$
\end{proof}


\section{Left Half Plane}\label{sec eta left}

We apply methods from \cite{yildirim} to find the locations of the zeros of \(\eta'\) on the left half plane. 

\begin{theorem}\label{thm deta bound}
Let \(n\ge 3\).  Then there exists \(\sigma\in[-2n-2,-2n-2+\epsilon]\) such that \(\eta'(\sigma)=0\) where
\begin{equation}\label{mainEpsIneq}
0 \leq \epsilon \leq -\frac{6}{\pi^2}(\log(2n+2)+\tau) + \frac{6}{\pi^2}\sqrt{(\log(2n+2)+\tau)^2+\frac{\pi^2}{3}}
\end{equation}
with $\tau=\frac{2}{3}\log(2) - \frac{1}{6} - \frac{1}{81} - \frac{1}{2}$.
\end{theorem}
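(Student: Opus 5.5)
The plan is to study $\eta'/\eta$ on the negative real axis through the logarithmic form of the functional equation, following the method of \cite{yildirim}, and to locate a sign change of $\eta'$ by the intermediate value theorem. The trivial zeros of $\eta$ on the negative axis are $-2,-4,-6,\dots$, since $\eta(s)=(1-2^{1-s})\zeta(s)$ and $1-2^{1-s}\neq0$ for real $s<1$. Hence on the open interval $(-2n-2,-2n)$ the function $\eta$ is real, nonvanishing and of constant sign, and $\eta'(\sigma)=0$ there exactly when $\frac{\eta'}{\eta}(\sigma)=0$. As $\sigma\downarrow -2n-2$, the simple zero of $\eta$ gives $\frac{\eta'}{\eta}(\sigma)\sim \frac{1}{\sigma+2n+2}\to+\infty$. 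It therefore suffices to show that $\frac{\eta'}{\eta}(-2n-2+\epsilon)\le 0$ for $\epsilon$ equal to the right-hand side of \eqref{mainEpsIneq}.

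To evaluate $\frac{\eta'}{\eta}$ at $\sigma=-2n-2+\epsilon$, I will write $\sigma=1-s$ with $s=2n+3-\epsilon$ real and large, and use \eqref{logarithmic eta}:
\[
\frac{\eta'}{\eta}(1-s)=\log(2\pi)+\frac{\pi}{2}\tan\Bigl(\frac{\pi s}{2}\Bigr)-\frac{\Gamma'}{\Gamma}(s)-\frac{\zeta'}{\zeta}(s)-\frac{\log 2}{2^{s}-1}.
\]
Since $s=2n+3-\epsilon$, we have $\tan(\pi s/2)=\tan\bigl(\tfrac{3\pi}{2}-\tfrac{\pi\epsilon}{2}\bigr)=\cot(\pi\epsilon/2)$. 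I will bound this from above by $\frac{2}{\pi\epsilon}-\frac{\pi\epsilon}{6}$; the inequality $\cot x\le 1/x-x/3$ holds for $0<x<\pi/2$. The remaining terms will be bounded in the unfavourable direction:
\begin{itemize}
\item $-\frac{\zeta'}{\zeta}(s)\le -\frac{\zeta'}{\zeta}(3)$, which is small and positive;
\item $-\frac{\log 2}{2^s-1}<0$, so this term can be dropped;
\item $\frac{\Gamma'}{\Gamma}(s)\ge \log s-\frac{1}{2s}-\frac{1}{12s^2}$ for real $s>0$.
\end{itemize}
Using $\log s\ge \log(2n+2)$ for $\epsilon\le1$, these combine into a bound of the form
\[
\frac{\eta'}{\eta}(-2n-2+\epsilon)\le \frac{1}{\epsilon}-\frac{\pi^2\epsilon}{12}-\log(2n+2)-\tau .
\]
The constant $\tau$ collects $\log(2\pi)$, the digamma correction terms at $s\ge 9$ (the hypothesis $n\ge3$ gives the $\frac{1}{81}$-type term), and the $\zeta'/\zeta$ and $2^{-s}$ contributions. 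I expect the stated $\tau$ to come out of exactly this bookkeeping.

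Writing $L=\log(2n+2)+\tau$, the right-hand side is $\le0$ iff $\frac{\pi^2}{12}\epsilon^2+L\epsilon-1\ge0$. This holds when $\epsilon$ is at least the positive root, which is
\[
-\frac{6}{\pi^2}L+\frac{6}{\pi^2}\sqrt{L^2+\frac{\pi^2}{3}},
\]
precisely the bound in \eqref{mainEpsIneq}. This root is below $1$ because $L>0$ for $n\ge3$, which justifies the restriction $\epsilon\le 1$ used above. The intermediate value theorem on $(-2n-2,-2n-2+\epsilon]$ then produces the zero of $\eta'$.

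The main obstacle is the constant bookkeeping. I need an upper bound for $\frac{\pi}{2}\cot(\pi\epsilon/2)$ that is sharp enough to give exactly the $\pi^2/12$ quadratic coefficient. I also need the lower bound for $\Gamma'/\Gamma$ and the estimates for the $\zeta'/\zeta$ and $2^{-s}$ terms to be uniform in $\epsilon\in[0,1]$ and $n\ge3$, so that everything reduces to the single constant $\tau$. If my constants do not match $\tau$ exactly, the argument still works with whatever constant the estimates give, provided it is at least as favourable.
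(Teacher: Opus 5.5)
Your route runs in the opposite logical direction from the paper's, though the ingredients are the same: the logarithmic derivative of the functional equation, the lower bound for $\psi$, the cotangent expansion and the quadratic in $\epsilon$. The paper bounds the $\frac{\eta'}{\eta}(\sigma)$ term by Lemma~\ref{lemma:LogDeriv} rather than splitting it into $\frac{\zeta'}{\zeta}$ and a power-of-$2$ term. It then derives a \emph{necessary} condition $\frac{\pi}{2}\tan(\frac{\pi\sigma}{2})\ge\log\sigma+\tau$ at any zero $1-\sigma$ of $\eta'$. The sign of $\tan$ forces $\epsilon<1$, and the cotangent bound then forces $\epsilon\le\epsilon_0$. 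Existence is left to sign/Rolle considerations, which are made explicit only in the next theorem. Your intermediate value argument uses $\frac{\eta'}{\eta}\to+\infty$ at the simple zero $-2n-2$ and $\frac{\eta'}{\eta}(-2n-2+\epsilon_0)\le 0$. It proves existence directly and is cleaner for the statement as posed. The paper's direction instead gives the stronger fact that every zero of $\eta'$ in $(-2n-2,-2n)$ lies within $\epsilon_0$ of $-2n-2$ (for $n\ge 4$), which is what the following theorem uses.

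Two points in your bookkeeping need repair.

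First, $\epsilon_0<1$ is equivalent to $L>1-\frac{\pi^2}{12}\approx 0.18$, not to $L>0$. At $n=3$ one has $L\approx 0.025$ and $\epsilon_0\approx 1.088$, consistent with the paper's remark that the bound is only below $1.1$. So for $n=3$ your evaluation point has $s=9-\epsilon_0\approx 7.91$. The step $\log s\ge\log(2n+2)$ then fails, by about $0.011$, and ``$s\ge 9$'' is false. The cotangent bound survives, since $\cot x\le \frac1x-\frac x3$ holds on all of $(0,\pi)$.

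Second, the identity you take from \eqref{logarithmic eta} is off by $\log 2$. Because $\frac{d}{ds}\log(1-2^s)=\log 2+\frac{\log 2}{2^s-1}$, the constant should be $\log\pi$, not $\log(2\pi)$; the paper's proof of this theorem gets it right via the term $\frac{\log(2)2^s}{1-2^s}$. This error overstates $\frac{\eta'}{\eta}(1-s)$, so it does not invalidate an upper bound. However, with your formula and the crude estimate $-\frac{\zeta'}{\zeta}(s)\le-\frac{\zeta'}{\zeta}(3)\approx 0.165$, the terms $\frac1{2s}+\frac1{12s^2}-\frac{\zeta'}{\zeta}(s)$ come to about $0.22$--$0.23$ for $7.9\le s\le 9$. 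That exceeds the $\approx 0.217$ that $\tau$ leaves after $\log(2\pi)$, so in that range your constant is not as favourable as $\tau$.

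Using the correct constant $\log\pi$ gives you $\log 2$ of extra room. That absorbs both the $n=3$ loss and the crude $\zeta'/\zeta$ estimate with a wide margin. Alternatively, bound $-\frac{\zeta'}{\zeta}(s)$ at $s\ge 7.9$, where it is below $0.01$. With either repair your proof goes through.
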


We have $\tau\approx -2.05479129$ and the right hand side of (\ref{mainEpsIneq}) is less
than \(1.1\) for $n\ge 3$, and converges to 0 as \(n\) approaches infinity.  
Before we get to the proof of the theorem, we prove two lemmas.

\begin{lemma}\label{lemma:LogDeriv}
If $\sigma>3$, then $\left| \frac{\eta'}{\eta}(\sigma)\right|<\frac{1}{2}$.
\end{lemma}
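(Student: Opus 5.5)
For real $\sigma>3$, I would bound the logarithmic derivative by estimating $|\eta'(\sigma)|$ from above and $|\eta(\sigma)|$ from below, using the Dirichlet series directly. On the real axis everything is real, so no phases need to be controlled.

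\emph{Numerator.} From the series for $\eta^{(k)}$ with $k=1$,
\[
|\eta'(\sigma)|=\Bigl|\sum_{n\ge2}(-1)^{n}\frac{\log n}{n^\sigma}\Bigr|\le \sum_{n\ge 2}\frac{\log n}{n^\sigma}=-\zeta'(\sigma).
\]
For a sharper bound I would use the alternating structure instead. The terms $\log n/n^\sigma$ are decreasing in $n\ge2$ once $\sigma>3$, since $x^{-\sigma}\log x$ decreases for $x>e^{1/\sigma}$. The alternating series test then gives
\[
0<-\eta'(\sigma)\le \frac{\log 2}{2^\sigma}\le \frac{\log 2}{8}\approx 0.0866 .
\]

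\emph{Denominator.} The series for $\eta$ is alternating with decreasing terms, so
\[
\eta(\sigma)\ge 1-2^{-\sigma}\ge 1-\tfrac18=\tfrac78 .
\]

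Combining the two estimates,
\[
\Bigl|\frac{\eta'}{\eta}(\sigma)\Bigr|\le \frac{\log 2/8}{7/8}=\frac{\log 2}{7}<0.1<\tfrac12 .
\]
Even the cruder numerator bound $-\zeta'(3)\approx 0.198$ gives $0.198/0.875<1/2$, so either route works.

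The only step needing care is checking that the terms are monotone, so that the alternating series bounds apply to both $\eta$ and $\eta'$. This follows from $\frac{d}{dx}\bigl(x^{-\sigma}\log x\bigr)=x^{-\sigma-1}(1-\sigma\log x)<0$ for $x\ge2$ and $\sigma>3$, and from the trivial monotonicity of $n^{-\sigma}$. If the statement is intended for complex $s$ with $\Re s>3$, I would instead use
\[
|\eta'(s)|\le -\zeta'(\sigma)\le-\zeta'(3),\qquad |\eta(s)|\ge |1-2^{1-s}|\,|\zeta(s)|\ge (1-2^{-2})\,\frac{\zeta(2\sigma)}{\zeta(\sigma)},
\]
and check numerically that the resulting ratio stays below $1/2$, which it does with room to spare: roughly $0.198/\bigl(0.75\cdot 1.017/1.202\bigr)\approx 0.31$. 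The statement only needs the real case, which is what Theorem~\ref{thm deta bound} uses.
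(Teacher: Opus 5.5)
Your proof is correct, but it takes a different route from the paper. The paper ignores all signs. It bounds $|\eta'(\sigma)|\le\sum_{n\ge2}\log n/n^\sigma<\int_1^\infty x^{-\sigma}\log x\,dx=(\sigma-1)^{-2}$ and $|\eta(\sigma)|\ge1-\sum_{n\ge2}n^{-\sigma}>1-\int_1^\infty x^{-\sigma}\,dx=\frac{\sigma-2}{\sigma-1}$ by comparison with integrals. This gives $|\frac{\eta'}{\eta}(\sigma)|\le\frac{1}{(\sigma-1)(\sigma-2)}$, which equals $\frac12$ at $\sigma=3$; that is where the hypothesis $\sigma>3$ comes from.

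You instead use the Leibniz bounds for alternating series with decreasing terms, so you keep the cancellation the paper discards. This gives the much sharper estimate $|\frac{\eta'}{\eta}(\sigma)|\le\frac{\log 2}{2^\sigma-1}$, which is exponentially small rather than of order $\sigma^{-2}$. Since your monotonicity check only needs $\sigma\log 2\ge1$, the same argument proves the lemma for all $\sigma\ge1/\log 2$, where the bound is at most $\log 2/(e-1)\approx0.40$. It also avoids a loose point in the paper's integral comparison: $x^{-\sigma}\log x$ is increasing on $[1,e^{1/\sigma}]$, so the $n=2$ term needs a separate check against $\int_1^2$.

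What the paper's route buys is generality. It uses only $|n^{-s}|=n^{-\sigma}$, so it holds verbatim for complex $s$ with $\Re(s)>3$. Your argument is confined to the real axis, which is why you need a separate bound for the complex case. There, the paper-style estimate $|\eta(s)|\ge2-\zeta(\sigma)\ge2-\zeta(3)$ is simpler than your Euler-product bound and also suffices.

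One small slip: differentiating term by term gives $\eta'(s)=\sum_{n\ge2}(-1)^n\log n\,n^{-s}$, whose leading term is $+\log 2/2^\sigma$. So in fact $0<\eta'(\sigma)\le\log 2/2^\sigma$, consistent with $\eta(\sigma)$ increasing to $1$, rather than $0<-\eta'(\sigma)$. (The displayed formula for $\eta^{(k)}$ in the introduction is off by a factor $-1$, which may be the source.) This is harmless for your absolute-value bound. With the correct sign, your argument even shows $\frac{\eta'}{\eta}(\sigma)>0$. The proof of Theorem~\ref{thm deta bound} uses only the lower bound $\frac{\eta'}{\eta}(\sigma)\ge-\frac12$, so that term could simply be bounded below by $0$ there.
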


\begin{proof}
Observe that $\eta'(s) = \sum\limits_{n=2}^{\infty}\frac{(-1)^{n+1}\log(n)}{n^s}$, for $\sigma>0$. And because the terms are monotonically decreasing for $\sigma>2$, and the continuous function is an upper bound for the corresponding discrete step-function, we get
\[
|\eta'(\sigma)|=\left| \sum\limits_{n=2}^{\infty}\frac{(-1)^{n+1}\log(n)}{n^\sigma}\right|\leq \sum\limits_{n=2}^{\infty}\frac{\log(n)}{n^\sigma} < \int\limits_{1}^{\infty}\frac{\log(x)}{x^\sigma}dx=\frac{1}{(\sigma-1)^2}.
\]
We now turn our attention to the series definition for $\eta(\sigma)$.  By applying the reverse triangle inequality and the sum-integral inequality, we obtain
\[
|\eta(\sigma)|=\left|\sum\limits_{n=1}^{\infty}\frac{(-1)^{n+1}}{n^\sigma}\right|\geq 1-\sum\limits_{n=2}^{\infty}\frac{1}{n^\sigma} > 1 - \int\limits_{1}^{\infty}\frac{1}{x^\sigma}dx = \frac{\sigma-2}{\sigma-1}.
\]
Since by assumption $\sigma>3$ with the above we get $\frac{1}{|\eta(\sigma)|}\leq\frac{\sigma-1}{\sigma-2}$.  Using this together with the inequality obtained earlier for $|\eta'(\sigma)|$, we have $\left|\frac{\eta'}{\eta}(\sigma)\right|\leq\frac{1}{(\sigma-1)(\sigma-2)}$.  Since $\sigma>3$ by assumption, we see that $\frac{1}{\sigma-1}<\frac{1}{2}$ and $\frac{1}{\sigma-2}<1$.  Thus, we have shown $\left| \frac{\eta'}{\eta}(\sigma) \right| < \frac{1}{2}$ as desired.
\end{proof}

\begin{lemma}\label{lemma:cotIneq}
Let $\sigma = 2n+3-\epsilon$, where $n\in\N$ and $0<\epsilon<1$. Then $$\frac{\pi}{2}\tan\left( \frac{\pi \sigma}{2}\right)\leq\frac{1}{\epsilon} - \frac{\pi^2}{12}\epsilon. $$
\end{lemma}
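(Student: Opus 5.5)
The plan is to reduce the tangent to a cotangent by periodicity and then use the Mittag-Leffler (partial fraction) expansion of the cotangent. First, since $\tan$ has period $\pi$,
\[
\tan\left(\frac{\pi\sigma}{2}\right)=\tan\left(n\pi+\frac{3\pi}{2}-\frac{\pi\epsilon}{2}\right)=\tan\left(\frac{3\pi}{2}-\frac{\pi\epsilon}{2}\right)=\cot\left(\frac{\pi\epsilon}{2}\right).
\]
Putting $x=\pi\epsilon/2\in(0,\pi/2)$, we have $1/\epsilon=\pi/(2x)$ and $\frac{\pi^2}{12}\epsilon=\frac{\pi}{2}\cdot\frac{x}{3}$. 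So after dividing by $\pi/2$ the claimed inequality becomes
\[
\cot x\le \frac{1}{x}-\frac{x}{3}\qquad (0<x<\pi/2).
\]

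For this I will use the classical expansion
\[
\cot x=\frac{1}{x}+\sum_{k=1}^\infty\frac{2x}{x^2-k^2\pi^2}=\frac{1}{x}-\sum_{k=1}^\infty\frac{2x}{k^2\pi^2-x^2},
\]
which converges for $x\notin\pi\mathbb{Z}$. For $0<x<\pi/2$ every denominator satisfies $0<k^2\pi^2-x^2<k^2\pi^2$, so each term obeys $\frac{2x}{k^2\pi^2-x^2}\ge\frac{2x}{k^2\pi^2}$. Summing and using $\sum_k k^{-2}=\pi^2/6$ gives
\[
\cot x\le\frac{1}{x}-\frac{2x}{\pi^2}\cdot\frac{\pi^2}{6}=\frac{1}{x}-\frac{x}{3}.
\]
Multiplying back by $\pi/2$ yields the lemma, and in fact with strict inequality.

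Alternatively, one could use the Laurent series $\cot x=\frac1x-\sum_{k\ge1}\frac{2^{2k}|B_{2k}|}{(2k)!}x^{2k-1}$, valid for $|x|<\pi$. All terms after $1/x$ are negative on $(0,\pi)$, and the first of them is $-x/3$, so the same bound follows.

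There is no real obstacle here. The only points needing care are the periodicity reduction, checking that $\frac{3\pi}{2}-\frac{\pi\epsilon}{2}$ indeed gives $+\cot$ and not $-\cot$, and keeping the factor $\pi/2$ consistent when translating between $\epsilon$ and $x$.
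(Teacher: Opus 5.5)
Your proof is correct, and your reduction to $\cot x\le \frac1x-\frac x3$ on $0<x<\pi/2$ matches the paper's first step. After that, the paper does exactly what you list as your alternative. It writes $\cot(\pi\epsilon/2)$ as its Laurent series in Bernoulli numbers and converts the coefficients using Euler's formula $\zeta(2n)=\frac{(-1)^{n+1}B_{2n}(2\pi)^{2n}}{2(2n)!}$. This gives $\frac{\pi}{2}\cot\left(\frac{\pi\epsilon}{2}\right)=\frac1\epsilon-\frac{\pi^2}{12}\epsilon-\sum_{n\ge2}\zeta(2n)\left(\frac{\epsilon}{2}\right)^{2n-1}$, and the paper then discards the positive tail.

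Your main argument, via the Mittag-Leffler expansion and a termwise comparison, is a different route, though closely related. If you expand each $\frac{2x}{k^2\pi^2-x^2}$ as a geometric series in $x^2/(k^2\pi^2)$ and sum over $k$, you recover the paper's $\zeta(2n)$-series exactly. So the paper's identity is the power-series form of your partial fractions.

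Your version buys economy. You need only $\zeta(2)=\pi^2/6$ and the positivity of $k^2\pi^2-x^2$, with no Bernoulli-number sign bookkeeping and no radius-of-convergence check. It also gives the strict inequality for all $0<x<\pi$ at once. The paper's version buys a tidy power-series form of the slack, showing it is of order $\zeta(4)\epsilon^3/8$. Your route gives an equally explicit slack in $\cot x$, namely $\sum_{k\ge1}\frac{2x^3}{k^2\pi^2(k^2\pi^2-x^2)}$.
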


\begin{proof}
Using the sum-difference formulas for $\sin\left( \frac{\pi\sigma}{2} \right)$ and $\cos\left( \frac{\pi\sigma}{2} \right)$, we see that
$\tan\left( \frac{\pi \sigma}{2} \right) = \cot\left( \frac{\pi\epsilon}{2}\right)$. Recalling the Taylor series expansion of the cotangent function, we have
$\cot\left(\frac{\pi\epsilon}{2} \right) = \sum\limits_{n=0}^{\infty} \frac{(-1)^n 2^{2n}B_{2n}}{(2n)!} \left( \frac{\pi\epsilon}{2}\right)^{2n-1}$, where $B_{2n}$ denotes the $2n^{\text{th}}$ Bernoulli number, and $\zeta(2n) = \frac{(-1)^{n+1}B_{2n}(2\pi)^{2n}}{2(2n)!}$, for $n\geq 1$, by the well-known formula of Euler (see \cite{euler34}). This allows us to rewrite the sum as:  $\cot\left( \frac{\pi\epsilon}{2} \right)=\frac{2}{\pi\epsilon} - \frac{\pi\epsilon}{6} - \frac{2}{\pi}\sum\limits_{n=2}^{\infty}\zeta(2n)\left( \frac{\epsilon}{2}\right)^{2n-1}$. Multiplying by $\frac{\pi}{2}$, we get
$\frac{\pi}{2}\cot\left(\frac{\pi\epsilon}{2}\right)=\frac{1}{\epsilon}-\frac{\pi^2}{12}\epsilon - \sum\limits_{n=2}^{\infty}\zeta(2n)\left( \frac{\epsilon}{2} \right)^{2n-1}$, and since $\zeta(2n)>0$ for all $n\geq 1$ and $0 \leq \epsilon \leq 1$, the sum $\sum\limits_{n=2}^{\infty}\zeta(2n)\left( \frac{\epsilon}{2} \right)^{2n-1}$ always remains positive. From this
it immediately follows that $\frac{\pi}{2}\cot\left( \frac{\pi\epsilon}{2}\right) \leq \frac{1}{\epsilon} - \frac{\pi^2}{12}\epsilon$, as desired.
\end{proof}

\begin{figure}
   \centering
    \includegraphics[width=.9\textwidth]{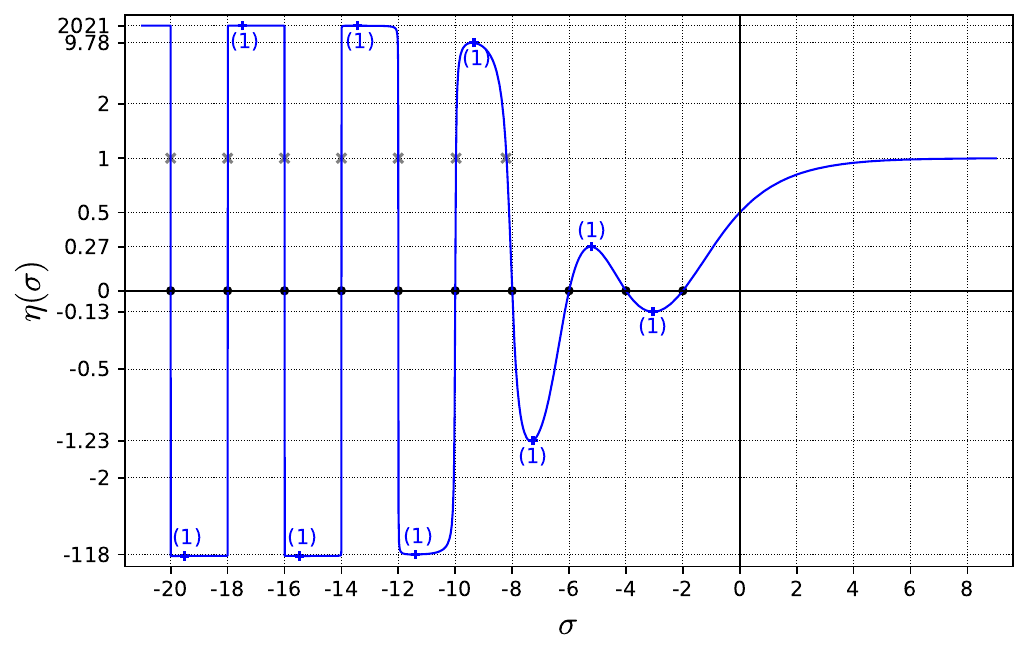}
    \caption{$\eta(\sigma)$ on \([-21,9]\) at an \(\arctan\) scale with zeros \(\bullet\) of \(\eta(s)\),
    zeros {\color{gray}\textsf{x}} of \(\eta(s)-1\), and zeros {\color{blue}$(1)^{\mathbf{+}}$} of \(\eta'(s)\) marked at the local extrema.}
    \label{fig eta neg}
\end{figure}

\begin{proof}[Proof of Theorem {\ref{thm deta bound}}]
Taking the logarithmic derivative of the functional equation (\ref{eta functional}) 
of \(\eta\)
in the form 
\[
\eta(1-s)= 2\left( \frac{1-2^s}{1-2^{1-s}} \right)(2\pi)^{-s}\Gamma(s)\cos\left( \frac{\pi s}{2}\right) \eta(s)
\]
and using elementary algebraic manipulation we obtain
\[
\eta'(1-s) = \textstyle\eta(1-s)\left[ \frac{\log(2)2^s}{1-2^s}\!+\!\frac{\log(2)2^{1-s}}{1-2^{1-s}}\!+\! \log(2\pi)\!-\!\psi(s)\!+\!\frac{\pi}{2}\tan\left( \frac{\pi s}{2} \right)\!-\!\frac{\eta'}{\eta}(s) \right].
\]
From this, we can now see that $\eta'(1-\sigma)=0$ if
\begin{equation}\label{eq:logDeriv}
\frac{\pi}{2}\tan\left( \frac{\pi \sigma}{2} \right) = \frac{\log(2)2^\sigma}{2^\sigma - 1} + \frac{\log(2)2^{1-\sigma}}{2^{1-\sigma}-1} - \log(2\pi) + \psi(\sigma) + \frac{\eta'}{\eta}(\sigma).
\end{equation}
With $\psi(\sigma)\geq \log(\sigma) - \frac{1}{2\sigma}-\frac{1}{12\sigma^2}$ for $\sigma\geq 1$ (see {\cite[equation 6.3.18 on page 259]{as1}}) and Lemma~\ref{lemma:LogDeriv} (assuming $\sigma>3$), equation ({\ref{eq:logDeriv}}) becomes
\begin{equation}\label{eq:logDeriv2}
\frac{\pi}{2}\tan\left( \frac{\pi \sigma}{2} \right) \geq \frac{\log(2)2^\sigma}{2^\sigma - 1} + \frac{\log(2)2^{1-\sigma}}{2^{1-\sigma}-1} - \log(2\pi) + \log(\sigma) - \frac{1}{2\sigma}-\frac{1}{12\sigma^2} - \frac{1}{2}.
\end{equation}
Note that if $\sigma>3$, then $\frac{\log(2)2^\sigma}{2^\sigma-1}\geq \log(2)$, $\frac{\log(2)2^{1-\sigma}}{2^{1-\sigma}-1} \geq -\frac{1}{3}\log(2)$, $\frac{1}{2\sigma}<\frac{1}{6}$, and $\frac{1}{12\sigma^2}<\frac{1}{81}$. Applying these inequalities to ({\ref{eq:logDeriv2}}) we obtain
\begin{equation}\label{eq:logDeriv3}
\frac{\pi}{2}\tan\left( \frac{\pi \sigma}{2}\right) \geq \log(\sigma) + \frac{2}{3}\log(2) - \log(2\pi) -\frac{1}{6} - \frac{1}{81} - \frac{1}{2}=\log(\sigma) +\tau
\end{equation}
Observe that the right hand side of ({\ref{eq:logDeriv3}}) tends to infinity as $\sigma$ gets large. The left hand side tends to infinity only when $\sigma$ is close to and to the right of odd integers.  This implies that there exists $\epsilon>0$ so that $\sigma=2n+3-\epsilon$.  It should also be noted that the right hand side of ({\ref{eq:logDeriv3}}) is positive for $\sigma>e^{-\tau}\approx 7.80520869$. This implies that $\epsilon<1$ when $\sigma>8$.  This is because if $\epsilon>1$ the left hand side of ({\ref{eq:logDeriv3}}) would be negative, yet the right would be positive, which is impossible.  We now attempt to approximate the value of $\epsilon$.  Applying Lemma \ref{lemma:cotIneq} to left hand side of the inequality in (\ref{eq:logDeriv3}), we have
\begin{equation}\label{eq:epsIneq}
\frac{1}{\epsilon} - \frac{\pi^2}{12}\epsilon \geq \log(2n+2) + \tau.
\end{equation}
Multiplying both sides of (\ref{eq:epsIneq}) by $\epsilon$ and pulling the right hand side over to the left, we see that (\ref{eq:epsIneq}) holds if
\begin{equation}\label{eq:epsIneq2}
-\frac{\pi^2}{12}\epsilon^2 - (\log(2n+2)+\tau)\epsilon + 1 \geq 0
\end{equation}
The left hand side of (\ref{eq:epsIneq2}) is a quadratic equation in $\epsilon$.  Using this fact, we see that (\ref{eq:epsIneq2}) holds whenever (\ref{mainEpsIneq}) holds.
\end{proof}
Analogously to the case of the Riemann \(\zeta\)-function, we have:

\begin{figure}
   \centering
    \includegraphics[width=.9\textwidth]{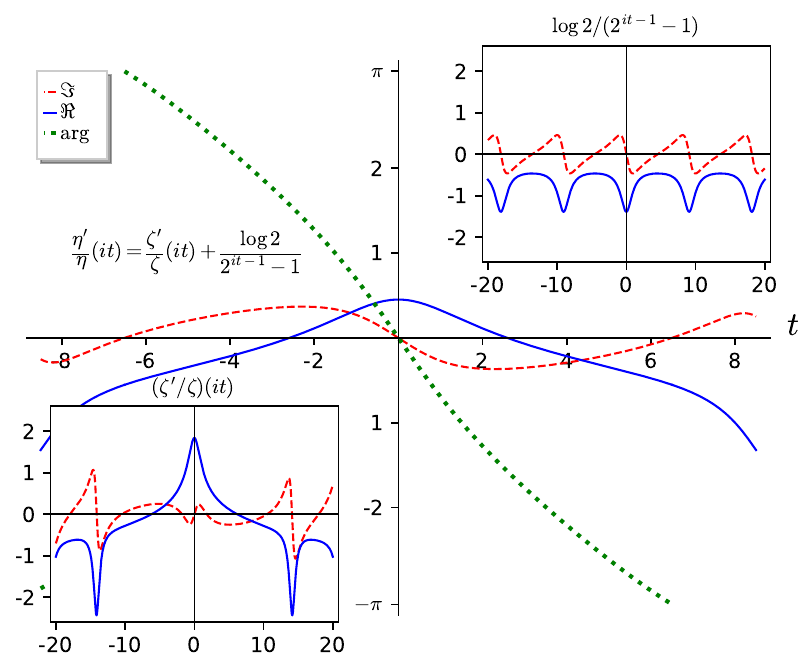}
    \caption{The real part (represented by a continuous line {\color{blue}--}), imaginary part (represented by a dashed line {\color{red}-\,\!-)}, and argument (represented by a dotted line
{\color{Green4}$\centerdot\!\centerdot\!\centerdot$}) of \(\frac{\eta'}{\eta}(it)=\frac{\zeta'}{\zeta}(it)+\frac{\log 2}{2^{it-1}-1}\) on the imaginary axis along with plots of the real and imaginary parts of \(\frac{\zeta'}{\zeta}(it)\) and \(\frac{\log 2}{2^{it-1}-1}\).}
    \label{fig eta arg}
\end{figure}

\begin{theorem}
The first derivative $\eta'(s)$ has only real zeros for $\sigma \leq 0,$ and exactly one real zero
in $(-2n-2,-2n)$ for  $n\in\N$.
\end{theorem}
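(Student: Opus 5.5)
The plan is to mirror the classical argument of Spira and Levinson--Montgomery for $\zeta'(s)$, working throughout with the logarithmic derivative expressed through the functional equation. Writing $s=1-w$ with $\Re(w)=1-\sigma\ge 1$, the formula derived in the proof of Theorem~\ref{thm deta bound} gives
\[
\frac{\eta'}{\eta}(1-w)= \frac{\log(2)2^w}{1-2^w}+\frac{\log(2)2^{1-w}}{1-2^{1-w}}+\log(2\pi)-\psi(w)+\frac{\pi}{2}\tan\Bigl(\frac{\pi w}{2}\Bigr)-\frac{\eta'}{\eta}(w).
\]
Zeros of $\eta'$ with $\sigma\le 0$ that are not zeros of $\eta$ are exactly the zeros of this expression. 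The zeros of $\eta$ in $\sigma\le0$ are the simple trivial zeros $-2,-4,\dots$ (plus $s=0$ is not a zero since $\eta(0)=1/2$), and at simple zeros $\eta'\neq0$, so they can be ignored.

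\textbf{Non-real zeros.} For $\Im(w)=-t\neq 0$ I would show $\Re\bigl(\frac{\eta'}{\eta}(1-w)\bigr)\neq 0$, or when that fails, the imaginary part is nonzero. For $|t|\ge t_0$ I would bound the terms as follows.
\begin{itemize}
\item $\tan(\pi w/2)\to \pm i$ exponentially fast, so its real part is negligible.
\item $\Re\psi(w)=\log|w|+O(1/|w|)$, using the Stirling bound quoted in Section~\ref{sec eta critical}.
\item $\Re\bigl(\frac{\eta'}{\eta}(w)\bigr)$ is bounded for $\Re(w)\ge 1$ away from the zeros $1+2\pi i n/\log 2$. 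Near those zeros I would instead use $\frac{\eta'}{\eta}=\frac{\zeta'}{\zeta}+\frac{2^{1-w}\log2}{1-2^{1-w}}$; the second summand is the exact cancellation of the pole from the $\frac{\log(2)2^{1-w}}{1-2^{1-w}}$ term above, so the two combine into a bounded quantity on $\Re(w)\ge1$.
\end{itemize}
Hence $\Re\bigl(\frac{\eta'}{\eta}(1-w)\bigr)\le -\log|t|+C<0$ for $|t|$ large, uniformly in $\sigma\le 0$.

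For bounded $|t|$ with $\sigma\le -\sigma_1$, I would split on the size of $|t|$.
\begin{itemize}
\item When $|t|$ is not small, $\log|w|$ dominates.
\item When $|t|$ is small, I would use the imaginary part. $\Im\tan(\pi w/2)$ has constant sign in each half-plane and is at least of size $\gg |t|$ times the derivative of $\tan$. This dominates $\Im\psi(w)=O(|t|/|w|)$ and the remaining terms, which are $O(|t|)$ with small constants for real part of $w$ large.
\end{itemize}
The remaining compact region $-\sigma_1\le\sigma\le 0$, $|t|\le t_0$ I would settle by an argument-principle count on rectangles. Figure~\ref{fig eta arg}, showing $\frac{\eta'}{\eta}(it)$ on the imaginary axis, indicates that the line $\sigma=0$ itself causes no trouble.

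\textbf{Real zeros.} Since $\eta$ is real on $\R$ and vanishes at $-2n-2$ and $-2n$, Rolle's theorem gives at least one zero of $\eta'$ in $(-2n-2,-2n)$. For uniqueness I would show that $\frac{\eta'}{\eta}$ is strictly monotone on that interval. It runs from $+\infty$ to $-\infty$ between the simple poles, so it can cross $0$ only once. Differentiating the formula above in $\sigma$ along the real axis, the term $\frac{\pi^2}{4}\sec^2(\pi w/2)\ge \pi^2/4$ must dominate $\psi'(w)\le 1/(w-1)$ (small), $(\eta'/\eta)'(w)$ (small for $w>3$, by the method of Lemma~\ref{lemma:LogDeriv}) and the derivatives of the two $2^{\pm w}$ terms (tiny). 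The finitely many small $n$ I would check directly.

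\textbf{Main obstacle.} The hardest step is the intermediate region of small $|t|$ and moderate $|\sigma|$ in the non-real case, where none of the asymptotic dominances is sharp. There I expect to need explicit numerical constants, or a Rouch\'e/argument-principle count comparing the number of zeros of $\eta'$ with those of $\eta$ in boxes around $-2n-1$.
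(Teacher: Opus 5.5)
There are two genuine gaps in the non-real part.

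\textbf{First gap: the large-$|t|$ bound near $\sigma=0$.} Your bound $\Re\frac{\eta'}{\eta}(1-w)\le-\log|t|+C$ ``uniformly in $\sigma\le0$'' rests on the claim that $\frac{\log(2)2^{1-w}}{1-2^{1-w}}-\frac{\eta'}{\eta}(w)$ is bounded on $\Re(w)\ge1$. That combination is exactly $-\frac{\zeta'}{\zeta}(w)=\sum_m\Lambda(m)m^{-w}$. For every $t_0$ and $u>0$, the supremum of its real part over $\Re(w)=1+u$, $|t|\ge t_0$, equals $-\frac{\zeta'}{\zeta}(1+u)\sim\frac1u$ (Kronecker's theorem).

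So your estimate holds only for $\sigma\le-\delta$, with $C$ of order $1/\delta$. The unbounded strip $-\delta<\sigma\le0$, $|t|\ge t_0$ --- which contains the imaginary axis --- is covered by none of your cases.

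The missing ingredient is the Hadamard product, which underlies the paper's assertion (following Levinson--Montgomery) that $\Re\frac{\zeta'}{\zeta}<0$ on most of $\partial R$. For $\sigma\le0$ every nontrivial zero gives $\Re\frac{1}{s-\rho}<0$, so $\Re\frac{\zeta'}{\zeta}(s)<\frac12\log\pi-\frac12\Re\psi(\frac s2+1)+\Re\frac{1}{1-s}$. Together with $\Re\frac{\log 2}{2^{s-1}-1}<0$, this gives $\Re\frac{\eta'}{\eta}(s)<0$ uniformly up to and including $\sigma=0$, once $|t|$ exceeds a small absolute constant.

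\textbf{Second gap: the compact region.} The region $-\sigma_1\le\sigma\le0$, $|t|\le t_0$ is left to an unspecified ``argument-principle count on rectangles'', with the imaginary axis read off a figure. You flag this yourself as the main obstacle, and it is where the content of the theorem lies. The paper makes this count the entire proof:
\begin{itemize}
\item Take $R=\{-2n-1\le\sigma\le0,\ |t|\le n\}$. On $\partial R$ one has $\Re\frac{\eta'}{\eta}<0$ except on a short segment $\{it:|t|<r\}$.
\item On that segment, the sign pattern of the real and imaginary parts of $\frac{\eta'}{\eta}(it)$ (Figure~\ref{fig eta arg}) gives a total change of argument of $-2\pi$.
\item Hence $\frac{\eta'}{\eta}$ has exactly one zero fewer in $R$ than its $n$ poles $-2,\dots,-2n$.
\item Rolle's theorem already supplies $n-1$ real zeros, one in each $(-2k-2,-2k)$ with $1\le k\le n-1$.
\item Letting $n\to\infty$ excludes non-real zeros and proves uniqueness at once, with no pointwise estimates in the interior.
\end{itemize}

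With that count in place, your $\tan$-domination near the real axis is unnecessary. Your monotonicity argument is also unnecessary, but it is a correct and more elementary route to uniqueness of the real zero. For $w\in(2n+1,2n+3)$ the term $\frac{\pi^2}{4}\sec^2\frac{\pi w}{2}\ge\frac{\pi^2}{4}$ beats $\psi'(3)+(\zeta'/\zeta)'(3)<0.6$. This holds for every $n\ge1$, with no small cases to check.
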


\begin{proof} Let \(n\in\N\).
We proceed as in \cite[Proof of Theorem 9]{lm:1} and consider the logarithmic derivative 
\[
D(s):=
\frac{\eta'}{\eta}(s)=\left(\log\eta(s)\right)'
=\left(\log\left((1-2^{1-s})\zeta(s)\right)\right)'
=\frac{\log 2}{2^{s-1}-1}+\frac{\zeta'}{\zeta}(s)
\]
on the boundary \(\partial R\) of the rectangular region \(R=\{\sigma + it\mid -2n-1\le \sigma\le 0,\, -n\le t \le n\}\).
First, note that we have $\Re\left(\frac{\log 2}{2^{s-1}-1}\right)<0$ for \(s\in R\) and \(\Re(\frac{\zeta'}{\zeta}(s)) < 0\)  
on \(\partial R\) except for the stretch \(t\in (-r,r)\), where \(r\approx6.5052\).
The only local extremum of \(\Re(D(t))\) for \(t\in(-r,r)\) is the maximum at  
\(D(0)=\frac{\log 2}{2^{0-1}-1}+\frac{\zeta'}{\zeta}(0)=-2\log 2+\log(2\pi)>0\),
and moreover \(\Re(D(t))>0\) for \(t\in (-v,v)\), where \(v\approx 2.5841\),
and \(\Im(D(t))<0\) for \(t\in(-u,0)\), and \(\Im(D(t))>0\) for \(t\in(u,0)\),
where \(u\approx 3.5578\).
So \(\arg \frac{\eta'}{\eta}(s)\) changes by \(-2\pi\) on \(\partial R\) (see Figure \ref{fig eta arg}), and thus by the argument principle the number of zeros of \(\frac{\eta'}{\eta}(s)\) in \(R\) is one 
less than the number of its poles in \(R\).
But by (\ref{eta functional}) the only zeros of \(\eta(s)\) on the left half plane are at $s = -2n$ for $n\in\N$. Therefore, by Rolle's theorem, there is at least one zero of \(\eta'\) on $(-2n-2,-2n)$ and by Theorem \ref{thm deta bound} these are concentrated on the left half of this interval (for \(n\ge 4\)).
Hence \(\frac{\eta'}{\eta}(s)\) has \(n\) poles and \(n-1\) zeros in \(R\).

Since all this holds for all \(n\in\N\) the first derivative $\eta'(s)$ has only real zeros for $\sigma \leq 0,$ and exactly one real zero in $(-2n-2,-2n)$ for  $n\in\N$.
\end{proof}


\section{Counting Zeros}\label{sec eta number}


Let $N_\zeta(T)$ and $N_\zeta^k(T)$ denote the number of zeros $\rho$ inside the critical strip, with 
$0 < \Im(\rho) \leq T$, of $\zeta(s)$ and $\zetak(s)$, respectively.
Then by the classical Riemann-von Mangoldt formula (see Landau \cite{landau}): 
\[
N_\zeta(T) = \frac{T}{2 \pi} \log \frac{T}{2 \pi} - \frac{T}{2 \pi} + O(\log T),
\]
and according to a theorem of Berndt \cite{berndt}, for all $k\in\N$, 
\begin{equation}\label{eqberndt}
N_\zeta^k(T) =N_\zeta(T) - \frac{T}{2 \pi} \log 2 + O_k(\log T),
\end{equation}
Here we investigate the functions $N_{\eta}^{k}(T)$, that count the number of zeros
$\rho$ of $\eta^{(k)}(s)$ with $0 < \Im(\rho) \leq T$ where \(k\) is a positive integer.
By $N_{\eta}^{0}(T)$ we denote the number of zeros $\rho$ of $\eta(s)-1$ with $0 \leq \Im(\rho) \leq T$.
We prove: 

\begin{theorem}\label{thm eta number}
    For $k\in\N\cup\{ 0\}$, as $T\to \infty$, we have
    \(N_{\eta}^{k}(T) = N_{\zeta}(T)+\mathcal{O}(\log T)\).
\end{theorem}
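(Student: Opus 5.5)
We follow Berndt's argument for $\zeta^{(k)}$ and apply the argument principle to $\eta^{(k)}(s)$ for $k\ge1$ (and to $\eta(s)-1$ for $k=0$) on a rectangle with vertices $-a\pm i\cdot\text{small}$, $b$, $b+iT$, $-a+iT$. Here $b$ is large and $-a$ is a suitable negative abscissa. The count in the critical strip and the count in the whole rectangle differ only by $O_k(1)$. In the right half-plane we use the zero-free region of Theorem~\ref{thmone}(a), and for $k=0$ Proposition~\ref{sch eta}. In the left half-plane the zeros of $\eta^{(k)}$ are real apart from $O(1)$ exceptions, as the theorem above shows for $k=1$; in general this follows from the functional equation (\ref{eta functional}). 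We take the count along $\Im(s)>0$, so the real zeros do not contribute. So it suffices to evaluate $\frac{1}{2\pi}\Delta\arg$ of the function around the contour.

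\emph{Right side.} On $\sigma=b$ with $b$ large, the term $n=2$ dominates the Dirichlet series $(-1)^k\sum_{n\ge2}(-1)^{n}\log^k n\, n^{-s}$. Hence $\eta^{(k)}(s)=-(-1)^k(\log 2)^k2^{-s}(1+o(1))$, and the argument changes by $-T\log 2+O(1)$. This is exactly the feature that makes $\eta$ differ from $\zeta$: for $\zeta^{(k)}$ the dominant term is $\log^k 2\cdot 2^{-s}$, which gives Berndt's $-\frac{T}{2\pi}\log 2$. For $\eta(s)-1$ the dominant term is $-2^{-s}$, since the constant $1$ cancels, and we get the same change.

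\emph{Left side.} On $\sigma=-a$ we differentiate the functional equation (\ref{eta functional}) $k$ times. The factor $\Gamma(1-s)\pi^{s}\sin(\pi s/2)\cdot\frac{1-2^{1-s}}{1-2^{s}}\cdot\eta(1-s)$ varies fastest through $\Gamma$ and $\sin$. Its $k$-th derivative is the main term times $(\log(-s/2\pi)+O(1))^k$, in the manner of Berndt and Levinson--Montgomery. By Stirling, the argument of the main term changes by $\frac{T}{2\pi}\cdot 2\pi\cdot\big(\frac{1}{2\pi}\big)\big(T\log\frac{T}{2\pi}-T\big)+O(\log T)$. The factor $\frac{1-2^{1-s}}{1-2^{s}}$ on $\sigma=-a$ behaves like $2^{1-s}\cdot(-1)$ up to a factor $1+O(2^{-a})$. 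It therefore contributes $+T\log 2$, which cancels the right-side $-T\log 2$. The factor $\eta(1-s)$ with $\Re(1-s)=1+a$ large, and the log-power factor, contribute $O(\log T)$.

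\emph{Top side and main obstacle.} On $t=T$ we bound the change of argument by $O(\log T)$ with the standard Jensen/Backlund argument. We count the zeros of $\Re\,\eta^{(k)}(\sigma+iT)$ for $-a\le\sigma\le b$ by applying Jensen's formula to $\frac12\big(\eta^{(k)}(s+iT)+\overline{\eta^{(k)}(\bar s+iT)}\big)$ in a disc centred at $b+iT$. The value at the centre is $\gg 1$ by dominance, and the growth is polynomial in $T$, from the convexity bound for $\eta=(1-2^{1-s})\zeta$ and Cauchy estimates for the derivatives. Summing the three sides, the count becomes $\frac{T}{2\pi}\log\frac{T}{2\pi}-\frac{T}{2\pi}+O(\log T)=N_\zeta(T)+O(\log T)$.

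I expect the main obstacle to be the left side, in particular keeping the $k$-fold differentiation of the functional equation under control. A cleaner route would be to write $\eta^{(k)}(s)=F(s)\,(L(s))^k(1+o(1))$, with $F$ the right side of (\ref{eta functional}) and $L=\frac{F'}{F}$, valid for $\sigma=-a$ with $a$ fixed and large and $|t|$ large. This requires checking that $\frac{F'}{F}\sim-\log|t|$ dominates the lower-order terms uniformly, including near the poles of $\frac{1}{1-2^{s}}$. Those poles lie on $\sigma=0$, so they are harmless on $\sigma=-a$.
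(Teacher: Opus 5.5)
Your route is the paper's: Berndt's argument principle on a rectangle, with the right edge giving $-T\log 2$ from the $2^{-s}$ term (exactly as for $\zeta^{(k)}$), the top edge giving $O(\log T)$ via Jensen, and the left edge handled by the $k$-fold differentiated functional equation and Stirling. You also correctly see that the difference from Berndt's count comes from the left edge, where a $2$-power factor returns $+T\log 2$ and cancels the right edge. But the formula you write at exactly that point is wrong. The expression $\Gamma(1-s)\pi^{s}\sin(\pi s/2)\frac{1-2^{1-s}}{1-2^{s}}\eta(1-s)$ is not a constant multiple of $\eta(s)$, because it lacks the factor $2^{s}$. The correct identity is $\eta(s)=\frac{1-2^{1-s}}{1-2^{s}}\chi(s)\eta(1-s)$ with $\chi(s)=2^{s}\pi^{s-1}\sin(\pi s/2)\Gamma(1-s)$. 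Equivalently, from (\ref{eta functional}), $\eta(s)=-2\pi^{s-1}\sin(\pi s/2)\Gamma(1-s)\frac{1-2^{s-1}}{1-2^{s}}\eta(1-s)$, where the $2$-power factor is $1+O(2^{-a})$ on $\sigma=-a$; this is how the paper's $\pi^{s}$ absorbs the $\log 2$. With your factor as written, Stirling gives $T\log\frac{T}{\pi}-T$ for the $\pi^{s}\Gamma\sin$ part. Adding your separate $+T\log 2$ then counts it twice and yields $N_\zeta(T)+\frac{T}{2\pi}\log 2$. The value you quote, $T\log\frac{T}{2\pi}-T$, is the one for $\chi$, once you delete the spurious prefactor $\frac{T}{2\pi}\cdot 2\pi\cdot\frac{1}{2\pi}$ that makes it quadratic in $T$. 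So your intended bookkeeping is right, but this $T\log 2$ cancellation is the whole content of the theorem and must be written consistently.

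Two auxiliary claims should be removed. First, ``the count in the critical strip and the count in the whole rectangle differ only by $O_k(1)$'' is false. By Theorem~\ref{thmboxzeroeta}, for $k>A_2$ each box $F^{k}_{2,j}$ contains a zero of $\eta^{(k)}$ with real part near $q_2k$, which gives $\asymp T$ zeros with $\sigma>1$ and $0<t\le T$. The claim is also unnecessary, because $N^{k}_{\eta}(T)$ counts all zeros with $0<\Im(\rho)\le T$, which is exactly what the rectangle counts.

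Second, the claim that $\eta^{(k)}$ has only $O(1)$ non-real zeros in $\sigma<0$ is proved in the paper only for $k=1$. For $k\ge 2$ the paper only conjectures, following Farr, that there are none. Nor does it simply drop out of the functional equation. Just left of $\sigma=0$, near $t=2\pi n/\log 2$, $\eta(1-s)=(1-2^{s})\zeta(1-s)$ is small while $\eta'(1-s)$ is not, so the Leibniz cross terms are not dominated. You do not need this claim. Zeros with $-a<\sigma<0$ lie inside the rectangle and are legitimately counted. What you need is only that there are no non-real zeros with $\sigma\le -a$. That is a far-left dominance statement of the same kind as your left-edge estimate, as in Berndt's treatment of $\zeta^{(k)}$.

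Finally, for $k=0$, state explicitly that $|\eta(-a+it)|>2$ on the left edge for large odd $a$. Then $\eta-1$ has the same argument change there as $\eta$, up to $O(1)$.
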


\begin{proof}
    The method is standard. We count the number of zeros of  $\eta^{(k)}(s)$ inside rectangles with carefully chosen sides, making sure that  the right vertical $\sigma_{k}$ is large enough so that there are no zeros for $\sigma \geq \sigma_{k}>1$, and the left vertical  $\beta_{k}$ is sufficiently negative so that there are no zeros for $\sigma\leq \beta_{k}$. Also, we look for $\sigma_{k}$ large enough so that the first term dominates the series, i.e. 
    \begin{equation}\label{bound T1}
        \sum_{n=3}^{\infty}\frac{\log^{k} n}{(n/2)^{\sigma_{k}}} \; \leq \; \frac{1}{2}\log^{k} 2, 
    \end{equation}
    because then, for $\sigma \geq \sigma_{k}, $ we have 
\begin{equation}\label{eta left frac}
        |\eta^{(k)}(s)|\geq\frac{\log^{k} 2}{2^{\sigma}}-\sum_{n=3}^{\infty}\frac{\log^{k} n}{n^{\sigma}}\geq\frac{\log^{k} 2}{2^{\sigma}}-\frac{1}{2}\frac{\log^{k} 2}{2^{\sigma}}=\frac{1}{2}\frac{\log^{k} 2}{2^\sigma}>0. 
\end{equation}
Choose $\tau_{k}>0$ so that $\eta^{(k)}(s)$ has no zeros for $0<t\leq \tau_{k}.$ Lastly, choose $T_{k}=T$ so that the line $t=T$ is free of zeros for $\eta^{(k)}(s)$. Let $C$ be the rectangle (described positively) with vertices
\[\beta_{k}+i\tau_{k}, \sigma_{k}+i\tau_{k}, \sigma_{k}+iT, \beta_{k}+iT.\]
By the argument principle, 
\begin{align*}
    N_{\eta}^{k}(T) &= \frac{1}{2\pi i}\int_C \frac{d}{ds}\log(\etak) \; ds\\
    &=\frac{1}{2\pi i}\left\{\int_{\beta_{k}+i\tau_{k}}^{\sigma_{k}+i\tau_{k}}+\int_{\sigma_{k}+i\tau_{k}}^{\sigma_{k}+iT}+\int_{\sigma_{k}+iT}^{\beta_{k}+iT}+\int_{\beta_{k}+iT}^{\beta_{k}+i\tau_{k}}\right\}\frac{d}{ds}\log(\etak) \; ds\\
    &=\frac{1}{2\pi i}\{I_1+I_2+I_3+I_4\},
\end{align*}
say. We examine $I_1,I_2,I_3$ and $I_4$ in turn. 

Firstly, $I_1$ is independent of $T.$ Hence, $I_1=\mathcal{O}(1)$.
Secondly, 
\begin{align*}
    I_2&=\left[\log(\eta^{(k)}(s))\right]_{\sigma_{k}+i\tau_{k}}^{\sigma_{k}+iT} = \left[\log\left(\frac{(-1)^{k+1}\log 2}{2^s}\right)\right]_{\sigma_{k}+i\tau_{k}}^{\sigma_{k}+iT}+\left[\log(1+g(s))\right]_{\sigma_{k}+i\tau_{k}}^{\sigma_{k}+iT}
\end{align*}
where the function $g(s)$ is defined as 
\begin{align*}
    g(s)=\sum_{n=3}^{\infty}\frac{(-1)^{n-2}(\log(n)/\log 2)^{k}}{(n/2)^s}.
\end{align*}
By (\ref{bound T1}), $|g(s)|\leq \frac{1}{2}$ on the line $\sigma=\sigma_{k}$. Hence, $\Re\{1+g(s)\}\geq \frac{1}{2}$ and the argument of $1+g(s)$ ranges over an interval length no greater than $\pi$ as $s$ traverses the line $\sigma=\sigma_{k}.$ Hence, 
\(I_2 =-iT\log 2+\mathcal{O}(1)\).

To estimate $I_3$ we put $ \phi_{k}(s) = (-1)^{k}e^{iT\log 2}\eta^{(k)}(s)$. 
Hence, the leading term of the Dirichlet series for $\phi_{k}(s)$ is positive at $s=\sigma_{k}+iT$. 
Now, if $q$ denotes the number of zeros of $\Re\{\cdot\}$ on $J=(\beta_{k}+iT, \sigma_{k}+iT),$ it is divided into at most $q+1$ subintervals in each of which $\Re\{\cdot\}$ is of constant sign. Hence, the variation of $\Im\{\log(\phi_{k}(s)\}=\arg\{\phi_{k}(s)\}$ is at most $\pi$ in each subinterval.
\[\Im\{I_3\}=|\Im\{[\log(\phi_{k}(s)]_{\sigma_{k}+iT}^{\beta_{k}+iT}\}|\leq (q+1)\pi.\]
To estimate $q$ we first let 
\(f(z) = \frac{1}{2}\{\phi_{\sigma}(z+iT)+\overline{\phi_{\sigma}(\overline{z}+iT)}\}\)
and note that if $z=\sigma$ is real,
\(f(\sigma)=\Re\{\phi_{\sigma}(\sigma+iT)\}\).
Choose $T$ large enough so that 
\[T>\tau_{k}+2(\sigma_{k}-\beta_{k}).\]
Let $R=T-\tau_{k}$ and consider those $z$ with $|z-\sigma_{k}|<R$. Then 
\[\Im\{z+iT\}>T-R=\tau_{k}>0.\]
Thus, $\phi_{\sigma}(z+iT)$, and therefore $f(z)$ is analytic for $|z-\sigma_{k}|<R$. Let $n(\rho)$ denote the number of zeros of $f(z)$ in the circle $|z-\sigma_{k}|\leq \rho.$ If $r=2(\sigma_{k}-\beta_{k})$ and $r_1=\frac{1}{2}r$, we have
\[\int_0^r\frac{n(\rho)}{\rho}d\rho\geq n(r_1)\int_{r_1}^r\frac{d\rho}{\rho}=n(r_1)\log 2.\]
From Jensen's formula \cite{Jensen}, we have the following:
\begin{align}\label{eta n(r)}
    n(r_1)\leq \frac{1}{2\pi\log 2}\int_0^{2\pi}\log|f(re^{i\theta}+\sigma_{k})|d\theta-\frac{1}{\log 2}\log|f(\sigma_{k})|
\end{align}
Since $\etalpha(s)=\mathcal{O}(t^A)$ as $t \to \infty$ and 
\begin{align*}
    f(\sigma_{k})&= \Re\{\phi_{k}(\sigma_{k}+iT)\} =\Re\left\{\frac{\log^{k}(2)}{2^{\sigma_{k}}}+\frac{-1}{2^{\sigma_{k}}}\sum_{n=3}^{\infty}\frac{(-1)^{n-1}\log^{k}(n)}{(n/2)^{\sigma_{k}+iT}}\right\}\\
    &\geq \frac{\log^{k}(2)}{2^{\sigma_{k}}}+\frac{-1}{2^{\sigma_{k}}}\sum_{n=3}^{\infty}\frac{(-1)^{n-1}\log^{k}(n)}{(n/2)^{\sigma_{k}}} \; \geq \; \frac{1}{2}\frac{\log^{k}(2)}{2^{\sigma_k}}
\end{align*}
by (\ref{bound T1}), it follows from (\ref{eta n(r)}) that $n(r_1)=\mathcal{O}(\log T).$ Now, the zeros of $\Re\{\sigma_k+iT\}$ on $J$ correspond to an equal number of zeros of $f(z)$ on $(\beta_{k},\sigma_{k}).$ since $r_1=\sigma_{k}-\beta_{k}$, $(\beta_{k},\sigma_{k})$ is contained in the disc $|z-\sigma_{k}|\leq r_1.$ Hence $q\leq n(r_1)$ and 
\[\Im\{I_3\}=\mathcal{O}(\log T).\]
Lastly,
\[I_4=[\log(\eta^{(k)}(s)]_{\beta_{k}+iT}^{\beta_{k}+i\tau_{k}}\]
From the functional equation for $\eta$ (\ref{eta functional}) and Leibniz' Rule, 
\begin{align}\label{eta k div}
    \eta^{(k)}(s) =& \left\{(2-2^{s})\pi^{s-1}(-s)\sin\left(-\frac{s\pi}{2}\right)\Gamma(-s)\zeta(1-s)\right\}^{(k)}\notag\\ 
    =&\left\{(2-2^{s})\pi^{s-1}s\sin\left(\frac{s\pi}{2}\right)\Gamma(-s)\right\}^{(k)}\zeta(1-s)\notag\\ 
    &+ \sum_{j=0}^{k-1}\binom{k}{j}\left\{(2-2^{s})\pi^{s-1}s\sin\left(\frac{s\pi}{2}\right)\Gamma(-s)\right\}^{(j)}\zeta^{(k-j)}(1-s).
\end{align}
We set 
\begin{align*}
    \mu_k = \left\{(2-2^{s})\pi^{s-1}(-s)\sin\left(-\frac{s\pi}{2}\right)\Gamma(-s)\right\}^{(k)}
\end{align*}
and 
\begin{align*}
    \mu_{k-j}=\sum_{j=0}^{k-1}\binom{k}{j}\left\{(2-2^{s})\pi^{s-1}s\sin\left(\frac{s\pi}{2}\right)\Gamma(-s)\right\}^{(j)}
\end{align*}
%
%
Therefore, by (\ref{eta k div}) we have for the first derivative that 
\begin{align*}
    \eta^{(k)}(s) &= \mu_k\zeta(1-s)+\mu_{k-j}\zeta'(1-s)\\
    &=\mu_k+\mu_k(\zeta(1-s)-1)+\mu_{k-j}\zeta^{(k)}(1-s).
\end{align*}
Some important things to note are for sufficiently negative values $s$
\begin{align*}
    \left|\frac{1}{2-2^s}\right|\leq 1 \text{ and } 
    \left|\frac{1}{s}\right|\leq 1.
\end{align*}
We look at the following derivatives
\begin{align}\label{pi div}
    \{\pi^s\}^{(j)} = \log(\pi)^j(\pi)^s
\text{ and }
    \left(\sin\left(\frac{s\pi}{2}\right)\right)^{(j)}=\pm \left(\frac{\pi}{2}\right)^{j}\left\{\substack{\sin(\frac{s\pi}{2})\\\cos(\frac{s\pi}{2})} \right\},
\end{align}
depending whether $j$ is even or odd. By Stirling's formula for $\Gamma(s)$, we have 
\begin{align}\label{Gamma div}
    \Gamma^{(j)}(s) = \Gamma(s)\left\{\log^j(s)+\sum_{n=0}^{j-1}E_{nj}(s)log^n(s)\right\},
\end{align}
where $E_{nj(s)}=\mathcal{O}(1/s)$. From (\ref{pi div}) and (\ref{Gamma div}) we see that we can write (\ref{eta k div}) in the form
\begin{align}\label{eta k R12 div}
    \eta^{(k)}(s) = (2-2^{s})(\pi)^se^{\frac{is\pi}{2}}\Gamma(-s)\{R_1(s)+R_2(s)\},
\end{align}
where 
$\textstyle
    R_1(s) = \frac{\mu_{k}}{(2-2^{s})(\pi)^se^{\frac{is\pi}{2}}\Gamma(-s)}
\text{ and }
    R_2(s) =\frac{\mu_k(\zeta(1-s)-1)+\mu_{k-j}\zeta^{k}(s-1)}{(2-2^{s})(\pi)^se^{\frac{is\pi}{2}}\Gamma(-s)}
.$

Both $R_1(s)$ and $R_2(s)$ are finite sums, each term of which is $\mathcal{O}(\log^k(s))$ on $J'=(\beta_k+iT,\beta_k+i\tau_k)$. From (\ref{Gamma div}), we have 
\begin{align*}
    \left((2-2^{s})\pi^{s-1}s\sin\left(\frac{s\pi}{2}\right)\Gamma(-s)\right)^{(k)}\!\!\!&= \sum\limits_{j=0}^{k}\binom{k}{j}\left((2-2^{s})\pi^{s-1}s\sin\left(\frac{s\pi}{2}\right)\right)^{(k-j)}\\\cdot\;&\Gamma(-s)\left(\log^j(-s)+\sum\limits_{n=0}^{j-1}E_{nj}(s)\log^n(-s)\right)
\end{align*}
where $E_{nj(s)}=\mathcal{O}(1/s)$, we see from (\ref{pi div}) and (\ref{eta k R12 div}) that for $\beta_k$ sufficiently negative, $R_1(s)$ is dominated by $\log^k(-s)$ and is bounded away from zero. Also, choose $\beta_k$ sufficiently negative so that (\ref{eta left frac}) holds and so that 
\begin{align*}
    \left|\frac{R_2(s)}{R_1(s)}\right|
    &=\left|\frac{\mu_{k}(\zeta(1-s)-1)+\mu_{k-j}\zeta^{k}(s-1)}{(2-2^{s})(\pi)^se^{\frac{is\pi}{2}}\Gamma(-s)}\Bigm/\frac{\mu_{k}}{(2-2^{s})(\pi)^se^{\frac{is\pi}{2}}\Gamma(-s)}\right|\\
    &=\left|\frac{\mu_k(\zeta(1-s)-1)+\mu_{k-j}\zeta^{(k)}(s-1)}{\mu_k}\right|\\ 
    &=  \left|\zeta(1-s)-1+\frac{\mu_{k-j}}{\mu_{k}}\zeta^{(k)}(1-s)\right| \; < \; 1, 
\end{align*}
where $s$ belongs to the segment $J'$. Thus, $\arg\left\{1+\frac{R_2(s)}{R_{1}(s)}\right\}$ varies over an interval of length no greater than $\pi$ as $s$ traverses $J'$. Hence, from (\ref{eta k R12 div}), 
\begin{align*}
    I_4 = &\Bigg[\log(2-2^s)+\log(\pi)^s+\log(s)-\frac{i s\pi}{2}+\log(\Gamma(-s))\\ 
    &\;\;+\log(R_1(s))+\log\left\{1+\frac{R_2(s)}{R_1(s)}\right\}\Bigg]_{\beta_k+iT}^{\beta_k+i\tau_k}\\
    =&\;\mathcal{O}(1)-iT\log(\pi) +\mathcal{O}(1)+\mathcal{O}(\log T)-\frac{T\pi}{2}+\mathcal{O}(1) \\
    &+\left[-(s+\frac{1}{2})\log(-s)+s+\mathcal{O}(1) \right]_{\beta_k+iT}^{\beta_k+i\tau_k} 
    +\;\;\mathcal{O}\{\log(\log T)\}+\mathcal{O}(1),
\end{align*}
upon the use of Stirling's formula for $\log(\Gamma(s))$. Now, 
\begin{align*}
    (\alpha_k+\frac{1}{2}+iT)\log(-\alpha_k-iT)&=(\alpha_k+\frac{1}{2}+iT)\log(-iT(1-\alpha_k)/iT)\\
    &=(\alpha_k+\frac{1}{2}+iT)\log(-iT)+\mathcal{O}(1)\\
    &=iT\log T +\frac{1}{2}\pi T +\mathcal{O}(\log(T)).
\end{align*}
Thus, \(I_4 = iT(\log T-\log(\pi)-1)+\mathcal{O}(\log T)\) and we conclude 
\begin{align*}
    N^{k}_{\eta}(T) &= \frac{1}{2\pi}\sum_{j=1}^4\Im\{I_j\} =\frac{1}{2\pi}\{T\log 2+T(\log T-\log(\pi)-1)\}+\mathcal{O}(\log T)\\
    &=N^{k}_{\zeta}(T)+\frac{T\log 2}{2\pi}+\mathcal{O}(\log T) =N_{\zeta}(T)+\mathcal{O}(\log T),
\end{align*} 
which proves the theorem.
\end{proof} 

\begin{note}
Theorem \ref{thm eta number} is compatible with the observation that there appears to be a one to one correspondence between the zeros of \(\zeta\) and the zeros of the derivatives of \(\eta\) established by the
paths of zeros of \(\eta(s)-c\) and the paths of zeros of \(\etalpha(s)\).  There is no such correspondence
for the zeros \(s\) of \(\eta\) with \(\Re(s)=1\) that are not connected to the path of zeros of the derivatives by
a path of zeros of \(\eta(s)-c\) (see Note \ref{note -c}).
\end{note}


\section{Miscellaneous Conjectures and Remarks}\label{sec eta conjecture}

We would like to conclude this paper with some worthwhile observations, open problems, and unproved assertions we believe to be likely to be true.

From the horizontal alignment of the regions described in Theorems \ref{thmone} and \ref{thmboxzeroeta} 
and the paths of zeros from Corollary \ref{cor curve eta} and the numerical evidence illustrated 
in Figure \ref{eta plot} it seems reasonable to conjecture that the situation remains largely unchanged for the zero-counting functions $N_{\eta}^{\alpha}(T)$ for the fractional derivatives $\eta^{(\alpha)}(s)$:

\begin{conjecture}\label{conj count frac}
For $\alpha\in (0,\infty)$, as $T\to \infty$, we have 
\(N_{\eta}^{\alpha}(T)=N_{\zeta}(T)+\mathcal{O}(\log T). \)
\end{conjecture}

In Figure \ref{fig eta neg} we see that there cannot be a path consisting of real zeros of    
\(\eta(\sigma)-c\) for \(c\in[0,1]\) from the zeros \(-4\) and \(-6\) to a zero of \(\eta(\sigma)-1\).
In Figures \ref{eta alpha} and \ref{fig eta double zero} we see how these paths enter the complex
plane to reach zeros of \(\eta(s)-1\) at 
\(s\approx-5.5169183502\pm1.2011878887i\).

Following the paths of the zeros of \(\eta(s)^{(\alpha)}=0\) from these zeros of \(\eta(s)-1\) we 
find a double zero of \(\eta^{\alpha}\) for  \(\alpha\approx 0.4734296409\) near 
\(z=-4.8804424428\) (see again Figures \ref{eta alpha} and \ref{fig eta double zero})
where these paths of zeros of fractional derivatives meet the real axis.

The existence of this double zero is  
quite unique in the theory of the Riemann zeta function and Dirichlet L-functions.
Unfortunately, as surprising as the existence of this unusual zero is, as of right 
now we do not have a sufficient rational justification for it.  
We note that it is the only multiple zero of a fractional derivative 
for any of these functions that we have been able to discover so far. 

\begin{figure}[ht]
    \centering   
    \includegraphics[width=0.8\textwidth]{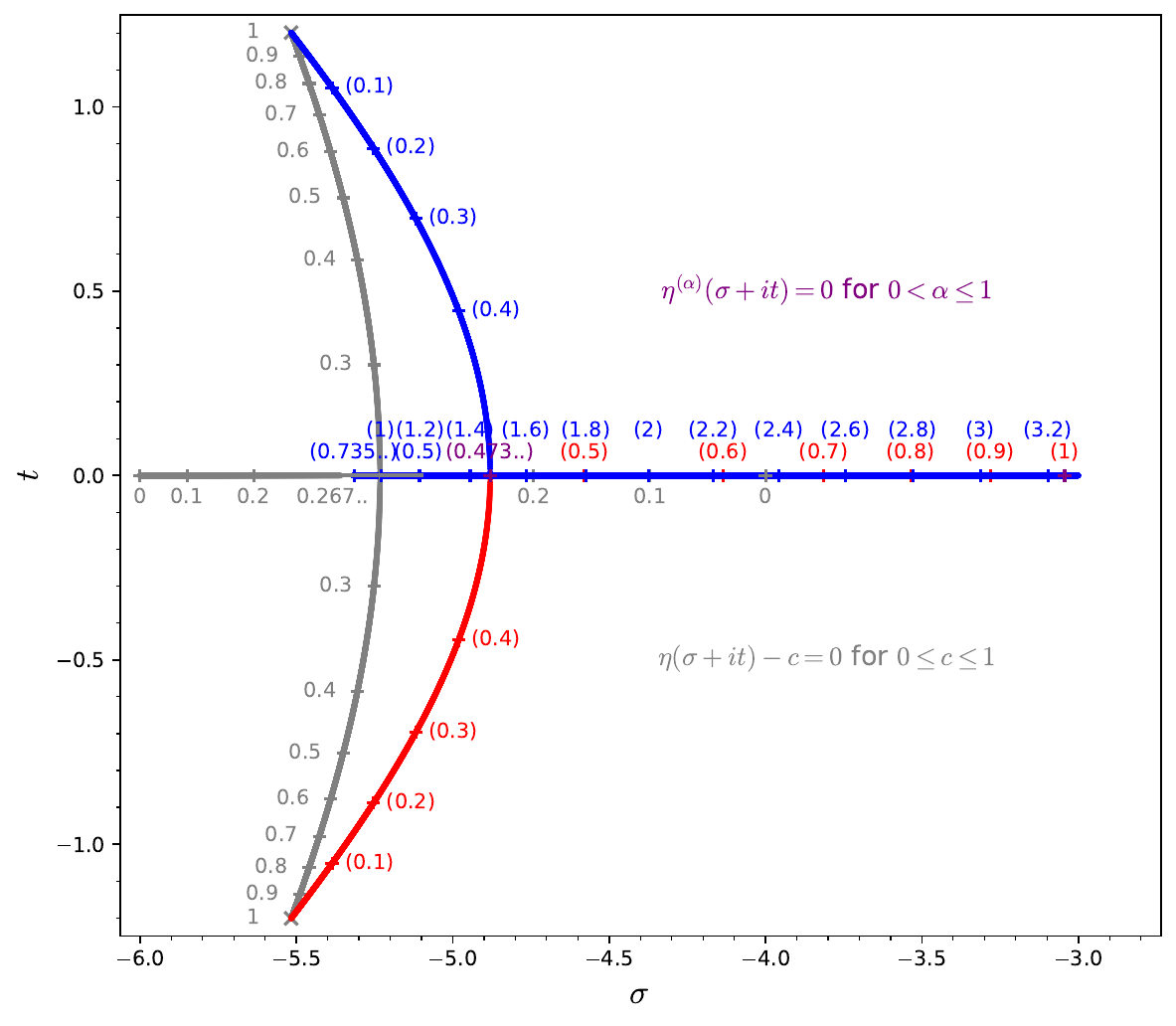}
    \caption{Zeros  of the fractional derivatives $\etalpha$ of the Euler \(\eta\)-function  on the left half plan with a double zero for \(\alpha\approx 0.473\) near \(\sigma = -4.88\) with selected zeros represented by $\color{purple}{+}^{(\alpha)}$. 
    The zeros  of \(\eta\) 
    at \(\sigma=-4\) and \(\sigma=-6\) denoted $\color{gray}\stackrel{{+}}{0}$ are connected to the 
     zeros  of $\eta(s)-1$ denoted {\color{gray}1\textsf{x}} by the zeros  of \(\eta(\sigma+it)-c\) for \(0\le c\le 1\) denoted $\color{gray}c^{+}$.   There is a double zero 
    of \(\eta(\sigma+it)-c\) for \(c\approx 0.267\) near \(\sigma=-5.23\).
    }
    \label{fig eta double zero}
\end{figure}

Figure \ref{eta alpha} also gives an idea how the paths of zeros of the (fractional) derivatives from the negative
real axes continue on the positive real axis on the paths from Corollary \ref{cor curve eta} for \(j=-1\).
All of the above supports a conjecture made by Ricky Farr in 2022:

\begin{conjecture}
If $k\in\N$ and $\sigma+it\in\C$ with $\sigma<0$ and $t\ne0$ then 
\(\eta^{(k)}(\sigma+it)\ne 0\).
\end{conjecture}

We believe that this extends to fractional derivatives with the exception of the zeros shown in 
Figure \ref{fig eta double zero}.

\begin{note}
This differs from the situation for the Riemann zeta function for which 
it has been proven that there is exactly one conjugate pair of zeros for
each of \(\zeta''\) and \(\zeta'''\) \cite{yildirim} on the left half plane
and it has been observed that the number of zeros on the left half plane of \(\zetak\) grows with \(k\). 
See \cite{fp} for examples of more zeros of integral derivatives and \cite{fps-icms} for counts of these along with examples of zeros of fractional derivatives.
\end{note}


\section{Acknowledgments}

The authors thank Ricky Farr for sharing his observations about the distribution of derivatives of \(\eta\) with them.  Most of the results discussed in this paper come from the research work that was carried out for the doctoral dissertation of the first-named author  \cite{c24} at the University of North Carolina Greensboro.  

All computations and plots in this paper were performed with the computer algebra system Sage \cite{sage}
using the mpmath multi-precision library \cite{mpmath}.
We evaluate fractional derivatives of \(\eta\) with 
an adaptation of the algorithm for the evaluation of the algorithm for the evaluation of fractional derivatives of \(\zeta\) from \cite{fps-icms}.
Our implementation in Sage along with examples that compute and plot 
paths of zeros of fractional derivatives and code for finding the double zero discussed in the paragraphs after Conjecture
\ref{conj count frac} is available at \url{https://github.com/sebastian-pauli/eta}.


\bibliography{zeta}

@article{abel1823,  
author = {Abel, N. H.},
title = {Solution de quelques probl{\' e}mes {\`a} l’aide d’int{\' e}grales d{\' e}finies}, 
journal ={Mag. Naturvidenskaberne}, 
issue = {2}, 
year = {1823} ,
note = {see also Abel’s Oeuvres, Vol. 1, 11--27, Christiania, 1881}
}

@book {as1,
    AUTHOR = {Abramowitz, M. and Stegun, I. A.},
     TITLE = {Handbook of mathematical functions with formulas, graphs, and
              mathematical tables},
    SERIES = {National Bureau of Standards Applied Mathematics Series},
    VOLUME = {55},
 PUBLISHER = {Dover},
      YEAR = {1970},
}

@article {AK15,
    AUTHOR = {Alzer, H. and Kwong, M.K.},
     TITLE = {On the concavity of {D}irichlet's eta function and related
              functional inequalities},
   JOURNAL = {J. Number Theory},
  FJOURNAL = {Journal of Number Theory},
    VOLUME = {151},
      YEAR = {2015},
     PAGES = {172--196},
      ISSN = {0022-314X,1096-1658},
   _MRCLASS = {11M41 (26D07 39B62)},
  _MRNUMBER = {3314208},
_MRREVIEWER = {Renata\ Macaitien\.e},
       DOI = {10.1016/j.jnt.2014.12.009},
       URL = {https://doi.org/10.1016/j.jnt.2014.12.009},
}

@PhdThesis{c24,
  Title                    = {On zeros of fractional derivatives of {D}irichlet series and polynomials},
  Author                   = {Caparatta, T.},
  School                   = {University of North Carolina Greensboro},
  Year                     = {2024},
  URL                      = {https://libres.uncg.edu/ir//list-etd.aspx?styp=ke&alpha=d&bs=Dirichlet%20Series}
}

@article {BF21,
    AUTHOR = {Boyadzhiev, K. and Frontczak, R.},
TITLE = {Series involving {E}uler's eta (or {D}irichlet eta) function},
   JOURNAL = {J. Integer Seq.},
  FJOURNAL = {Journal of Integer Sequences},
    VOLUME = {24},
      YEAR = {2021},
    NUMBER = {9},
     PAGES = {Art. 21.9.1, 14},
      ISSN = {1530-7638},
   _MRCLASS = {11F20 (11B39)},
  _MRNUMBER = {4336093},
}

@article {bps,
    AUTHOR = {Binder, T. and Pauli, S. and Saidak, F.},
     TITLE = {Zeros of high derivatives of the {R}iemann zeta function},
   JOURNAL = {Rocky Mountain J. Math.},
  FJOURNAL = {The Rocky Mountain Journal of Mathematics},
    VOLUME = {45},
      YEAR = {2015},
    NUMBER = {3},
     PAGES = {903--926},
      ISSN = {0035-7596},
   _MRCLASS = {11M06 (11M26)},
  _MRNUMBER = {3385969},
_MRREVIEWER = {Kamel Mazhouda},
       URL = {https://doi.org/10.1216/RMJ-2015-45-3-903},
}

@incollection{euler1730, 
author = {Euler, L.},
title = {Letter to {G}oldbach, {J}anuary 8, 1730}, 
editor ={Fuss, P. H.},
booktitle = {Correspondance math{\' e}matique et physique ... du XVIII. si{\' e}cle, l’Academie imperiale des sciences, St.-P{\' e}tersbourg}, 
volume = {1},
year = {1843}
}

@article{euler34,
author={Euler, L.}, 
title ={De progressionibus harmonicis observationes}, 
journal={Comm. Acad. Sci. Petropol},
volume={7}, 
pages={150-–161},
year={1740},
note={also see Euler’s Opera Omnia, Series 1, Vol. 14, 87–100}
}

@article{Euler1749, 
author={Euler, L.},
title ={Remarques sur un beau rapport entre les s{\' e}ries des puissances tant directes que r{\' e}ciproques}, 
journal = {Opera Omnia}, 
series = {1}, 
volume = {15},
pages = {70--90},
year = {1749},
}

@incollection {fp,
    AUTHOR = {Farr, R. E. and Pauli, S.},
    TITLE = {More Zeros of the Derivatives of the {R}iemann Zeta Function on the Left Half Plane},
    BOOKTITLE = {Topics form the 8th Annual UNCG  Regional Mathematics and Statistics Conference},
    SERIES = {Springer Proceedings in Mathematics \& Statistics},
    PAGES = {93--104},
    PUBLISHER = {Springer},
    YEAR = {2013}
}

@article {fps,
    AUTHOR = {Farr, R. E. and Pauli, S. and Saidak, F.},
     TITLE = {On fractional {S}tieltjes constants},
   JOURNAL = {Indag. Math. (N.S.)},
  FJOURNAL = {Koninklijke Nederlandse Akademie van Wetenschappen.
              Indagationes Mathematicae. New Series},
    VOLUME = {29},
      YEAR = {2018},
    NUMBER = {5},
     PAGES = {1425--1431},
      ISSN = {0019-3577},
   _MRCLASS = {11M35},
  _MRNUMBER = {3853435},
_MRREVIEWER = {Mattia Righetti},
       URL = {https://doi.org/10.1016/j.indag.2018.07.005},
}

@article {fps2,
    AUTHOR = {Farr, R. E. and Pauli, S. and Saidak, F.},
     TITLE = {A zero free region for the fractional derivatives of the {R}iemann zeta function},
   JOURNAL = {NZJM},
  FJOURNAL = {New Zealand Journal of Mathematics},
    VOLUME = {50},
      YEAR = {2018},
     PAGES = {1--9},
      ISSN = {1179-4984},
       URL = {http://nzjm.math.auckland.ac.nz/index.php/A_zero-free_region_for_the_fractional_derivatives_of_the_Riemann_zeta_function}
}

@article{hardy,
 ISSN = {09093540, 2446077X},
 URL = {http://www.jstor.org/stable/24529536},
 author = {Hardy, G.H.},
 journal = {Matematisk Tidsskrift. B},
 pages = {71--73},
 publisher = {Mathematica Scandinavica},
 title = {A new proof of the functional equation for the Zeta-function},
 urldate = {2024-01-02},
 year = {1922}
}

@book{titchmarsh1986theory,
  title={The Theory of the {Riemann} Zeta-function},
  author={Titchmarsh, E.C. and Heath-Brown, D.R.},
  isbn={9780198533696},
  lccn={lc86012520},
  series={Oxford science publications},
  url={https://books.google.com/books?id=1CyfApMt8JYC},
  year={1986},
  publisher={Clarendon Press}
}

@article {g:1867,
title={{\"U}ber `begrenzte' {D}erivation und deren {A}nwendung},
  author={Gr{\"u}nwald, A.K.},
  journal={Z. angew.  Math. und Phys},
  volume={12},
  pages={441--480},
  year={1867}
}

@article {kreminski03,
    AUTHOR = {Kreminski, R.},
     TITLE = {Newton-{C}otes integration for approximating {S}tieltjes
              (generalized {E}uler) constants},
   JOURNAL = {Math. Comp.},
  FJOURNAL = {Mathematics of Computation},
    VOLUME = {72},
      YEAR = {2003},
    NUMBER = {243},
     PAGES = {1379--1397 (electronic)},
      ISSN = {0025-5718},
     CODEN = {MCMPAF},
   _MRCLASS = {11Y60 (11M35)},
  _MRNUMBER = {1972742},
_MRREVIEWER = {Ekatherina A. Karatsuba},
       URL = {http://dx.doi.org/10.1090/S0025-5718-02-01483-7},
}

@book{le-1809,
author={Legendre, A.M.},
title= {M{\' e}moires de la classe des sciences math. et physiques},
publisher = {l’Institut de France}, 
address = {Paris},
year = {1809}
}

@article {l:1869-1,
    AUTHOR = {Letnikov, A.V.},
     TITLE = {Theory of differentiation of fractional order},
   JOURNAL = {Mat. Sbornik},
    VOLUME = {3},
     PAGES = {1--68},
      YEAR = {1868},
}

@article {l:1869-2,
    AUTHOR = {Letnikov., A.V.},
     TITLE = {Historical development of the theory of differentiation of fractional order},
   JOURNAL = {Mat. Sbornik},
    VOLUME = {3},
     PAGES = {85--119},
      YEAR = {1868},
}

@article {lm:1,
    AUTHOR = {Levinson, N. and Montgomery, H.L.},
     TITLE = {Zeros of the derivatives of the {R}iemann zeta-function},
   JOURNAL = {Acta Math.},
  FJOURNAL = {Acta Mathematica},
    VOLUME = {133},
      YEAR = {1974},
     PAGES = {49--65},
      ISSN = {0001-5962},
   _MRCLASS = {10H05},
  _MRNUMBER = {417074},
_MRREVIEWER = {Bruce C. Berndt},
       URL = {https://doi.org/10.1007/BF02392141},
}

@article {Mil13,
    AUTHOR = {Milgram, M.S.},
     TITLE = {Integral and series representations of {R}iemann's zeta
              function and {D}irichlet's eta function and a medley of
              related results},
   JOURNAL = {J. Math.},
  FJOURNAL = {Journal of Mathematics},
      YEAR = {2013},
     PAGES = {Art. ID 181724, 17},
      ISSN = {2314-4629,2314-4785},
   _MRCLASS = {11M06},
  _MRNUMBER = {3100743},
       DOI = {10.1155/2013/181724},
       URL = {https://doi.org/10.1155/2013/181724},
}

@manual{mpmath,
  key     = {mpmath},
  author  = {Johansson F. and others},
  title   = {mpmath: a {P}ython library for arbitrary-precision floating-point arithmetic (version 1.4)},
  note    = {\url{https://mpmath.org}},
  year    = {2026}
}

@article {zero-free-fract,
    AUTHOR = {Pauli, S. and Saidak, F.},
     TITLE = {Zero-free regions of the fractional derivatives of the
              {R}iemann zeta function},
   JOURNAL = {Lith. Math. J.},
  FJOURNAL = {Lithuanian Mathematical Journal},
    VOLUME = {62},
      YEAR = {2022},
    NUMBER = {1},
     PAGES = {99--112},
      ISSN = {0363-1672},
   _MRCLASS = {11M06 (26A33)},
  _MRNUMBER = {4383337},
       DOI = {10.1007/s10986-022-09551-2},
       URL = {https://doi.org/10.1007/s10986-022-09551-2},
}

@manual{sage,
  Key          = {SageMath},
  Author       = {{Sage Developers}},
  Title        = {{S}age{M}ath {M}athematical {S}oftware {S}ystem},
  note         = {\url{https://www.sagemath.org}},
  Year         = {2023},
}

@article {s:3,
    AUTHOR = {Skorokhodov, S.L.},
     TITLE = {Pad\'{e} approximants and numerical analysis of the {R}iemann zeta
              function},
   JOURNAL = {Zh. Vychisl. Mat. Mat. Fiz.},
  FJOURNAL = {Zhurnal Vychislitel\cprime no\u{\i} Matematiki i Matematichesko\u{\i} Fiziki.
              Rossi\u{\i}skaya Akademiya Nauk},
    VOLUME = {43},
      YEAR = {2003},
    NUMBER = {9},
     PAGES = {1330--1352},
      ISSN = {0044-4669},
   _MRCLASS = {11Y35 (11M06 41A21 65D20)},
  _MRNUMBER = {2014985},
_MRREVIEWER = {Matti Jutila},
}

@article {Son03,
    AUTHOR = {Sondow, J.},
     TITLE = {Zeros of the alternating zeta function on the line {$\mathfrak{R}(s)=1$}},
   JOURNAL = {Amer. Math. Monthly},
  FJOURNAL = {American Mathematical Monthly},
    VOLUME = {110},
      YEAR = {2003},
    NUMBER = {5},
     PAGES = {435--437},
      ISSN = {0002-9890,1930-0972},
   _MRCLASS = {11M41},
  _MRNUMBER = {2040887},
       DOI = {10.2307/3647831},
       URL = {https://doi.org/10.2307/3647831},
}

@article{s:5,
 Author = {{Speiser}, A.},
 Title = {{Geometrisches zur Riemannschen Zetafunktion.}},
 FJournal = {{Mathematische Annalen}},
 Journal = {{Math. Ann.}},
 ISSN = {0025-5831; 1432-1807/e},
 Volume = {110},
 Pages = {514--521},
 Year = {1934},
 Publisher = {Springer, Berlin/Heidelberg},
 Language = {German},
 MSC2010 = {11M26 11M06},
 Zbl = {0010.16401}
}

@article {yildirim,
    AUTHOR = {Y{\i}ld{\i}r{\i}m, C.Y.},
     TITLE = {Zeros of {$\zeta''(s)$} \& {$\zeta'''(s)$} in
              {$\sigma<\frac 12$}},
   JOURNAL = {Turkish J. Math.},
  FJOURNAL = {Turkish Journal of Mathematics},
    VOLUME = {24},
      YEAR = {2000},
    NUMBER = {1},
     PAGES = {89--108},
      ISSN = {1300-0098},
   _MRCLASS = {11M26},
  _MRNUMBER = {1792247},
_MRREVIEWER = {Bruce C. Berndt},
}

@Article{bp,
 Author = {{Boseman}, A. and {Pauli}, S.},
 Title = {{On the zeros of \(\zeta(s)-c\)}},
 FJournal = {{Involve}},
 Journal = {{Involve}},
 ISSN = {1944-4176},
 Volume = {6},
 Number = {2},
 Pages = {137--146},
 Year = {2013},
 Publisher = {Mathematical Sciences Publishers (MSP), Berkeley, CA},
 DOI = {10.2140/involve.2013.6.137},
 MSC2010 = {11M26},
 Zbl = {1291.11117}
}

@incollection {fps-icms,
    AUTHOR = {Farr, R.E. and Pauli, S. and Saidak, F.},
     TITLE = {Evaluating fractional derivatives of the {R}iemann zeta
              function},
 BOOKTITLE = {Mathematical software---{ICMS} 2020},
    SERIES = {Lecture Notes in Comput. Sci.},
    VOLUME = {12097},
     PAGES = {94--101},
 PUBLISHER = {Springer, Cham},
      YEAR = {[2020] \copyright 2020},
   _MRCLASS = {11M06},
  _MRNUMBER = {4139477},
       DOI = {10.1007/978-3-030-52200-1\_9},
}

@book{landau,
  address = {Leipzig},
  author = {Landau, E.},
  comment = {Early discussion of the O-Notation, by Landau personally. (pp.59-63)},
  note = {2 volumes. Reprinted by Chelsea, New York, 1953},
  publisher = {Teubner},
  title = {Handbuch der {Lehre} von der {Verteilung} der {Primzahlen}},
  year = 1909
}

@article{berndt,
title = "The number of zeros for $\zeta^{(k)}(s)$",
author = "Berndt, {B.C.}",
year = "1970",
doi = "10.1112/jlms/2.Part_4.577",
volume = "2",
pages = "577--580",
journal = "Journal of the London Mathematical Society",
issn = "0024-6107",
publisher = "Oxford University Press",
}

@article{spira3,
author = {Spira, R.},
title = {{An inequality for the Riemann zeta function}},
volume = {32},
journal = {Duke Mathematical Journal},
number = {2},
publisher = {Duke University Press},
pages = {247 -- 250},
year = {1965},
doi = {10.1215/S0012-7094-65-03223-0},
URL = {https://doi.org/10.1215/S0012-7094-65-03223-0}
}

@article{Jensen,
    author = {Jensen, J.},
    title = {{\em Sur un nouvel et important th{\' e}or{\` e}me de la th{\' e}orie des fonctions}},
    journal = {Acta Math.},
    year = {1899},
    pages = {359--364}
}

@book{mengoli,
    author ={Mengoli, P.},
    title = {Novae quadraturae arithmeticae},
    publisher = {Jacopo Monti},
    address = {Bologna},
    year = {1650} 
}

\bibliographystyle{amsalpha}

\end{document}